\newtheorem{theorem}{Theorem}[section]
\newtheorem{lemma}[theorem]{Lemma}
\newtheorem*{thmA}{Theorem~A}
\newtheorem*{thmB}{Theorem~B}
\newtheorem*{thmC}{Theorem~C}
\theoremstyle{definition}
\newtheorem*{example}{Example}
\newtheorem*{notation}{Notation}
\theoremstyle{remark}
\newtheorem{step}{\bf Step}
\newcommand{\Irr}{{\mathrm {Irr}}}
\newcommand{\cd}{{\mathrm {cd}}}
\newcommand{\Aut}{{\mathrm {Aut}}}
\newcommand{\Centralizer}{{\mathrm {C}}}
\newcommand{\PSL}{{\mathrm {PSL}}}
\newcommand{\PSU}{{\mathrm {PSU}}}
\newcommand{\PGL}{{\mathrm {PGL}}}
\begin{document}

\title[Prime graphs of finite groups]{Groups whose prime  graphs have no triangles}

\author{Hung P. Tong-Viet}
\email{Tongviet@ukzn.ac.za}
\address{School of Mathematics, Statistics and Computer Science\\
University of KwaZulu-Natal\\
Pietermaritzburg 3209, South Africa}

\subjclass[2000]{Primary 20C15; Secondary 05C25}

\date{\today}
\thanks{This research is supported by a Startup Research Fund from the College of Agriculture, Engineering and Science, the University of KwaZulu-Natal}
\keywords{character degrees; prime  graphs; triangles; trees; cycles}
\begin{abstract}
Let $G$ be a finite group and let $\cd(G)$ be the set of all complex irreducible character degrees
of $G.$ Let $\rho(G)$ be the set of all primes which divide some character degree of $G.$ The prime
graph $\Delta(G)$ attached to $G$ is a graph whose vertex set is $\rho(G)$ and there is an edge
between two distinct primes $u$ and $v$ if and only if the product $uv$ divides some character
degree of $G.$ In this paper, we show that if $G$ is a finite group whose prime  graph $\Delta(G)$
has no triangles, then $\Delta(G)$ has at most $5$ vertices. We also obtain a classification of all
finite graphs with $5$ vertices and having no triangles which can occur as prime graphs of some
finite groups. Finally, we show that the prime graph of a finite group can never be a cycle nor a
tree with at least $5$ vertices.
\end{abstract}
\maketitle

\section{Introduction}
 Let $G$ be a finite group and let $\cd(G)$ be the set of all character degrees of $G,$ that is,
$\cd(G)=\{\chi(1)\:|\:\chi\in\Irr(G)\},$ where $\Irr(G)$ is the set of all complex irreducible
characters of $G.$ We write $\rho(G)$ to denote the set of all primes which divide some character
degree of $G.$ It is well known that the character degree set $\cd(G)$ can be used to obtain
information about the structure of  the group $G.$ For example, the celebrated Ito-Michler's
Theorem states that if a prime $p$ divides no character degree of a finite group $G,$ then $G$ has
a normal abelian Sylow $p$-subgroup. Another well known result due to J. Thompson says that if a
prime $p$ divides every nontrivial character degree of a group $G,$ then $G$ has a normal
$p$-complement. A useful way to study the character degree set of a finite group $G$ is to attach a
graph structure on  $\cd(G).$

For a finite group $G,$ there are several ways to define a graph structure on the set $\cd(G).$ The
{\bf prime  graph} $\Delta(G)$ of $G$ is a graph whose vertex set is $\rho(G)$ and there is an edge
between two distinct primes $r$ and $s$ if and only if the product $rs$ divides some character
degree of $G.$ This graph was first defined in \cite{Manz-Staszewski-Willems}  and has been studied
extensively since then. There is also another graph attached to the set $\cd(G)$ which we call the
{\bf degree graph} and denote by $\Gamma(G).$ This graph has $\cd(G)-\{1\}$ as the vertex set, and
there is an edge between two distinct nontrivial degrees $a,b\in\cd(G)$ if and only if $\gcd(a,b)$
is nontrivial. However the prime graph $\Delta(G)$ is used more often as it is compatible with
normal subgroups and factor groups in the sense that the prime graphs of those groups are subgraphs
of the prime graph $\Delta(G).$ This property is very useful for induction argument.

Usually, there is a closed connection between the two graphs $\Delta(G)$ and $\Gamma(G).$ For
instance, $\Delta(G)$ is disconnected if and only if $\Gamma(G)$ is. However, there are examples
showing that while $\Gamma(G)$ contains no triangles but $\Delta(G)$ has one and vice versa.
Indeed, if $G\cong 2\cdot \textrm{A}_6,$ then $\cd(G)=\{1,4,5,8,9,10\}$ and we can see that the
prime $2$ divides three distinct nontrivial degrees of $G,$ and thus these degrees form a triangle
in $\Gamma(G);$ however $\Delta(G)$ has no triangles. Now let $G\cong\PSL_2(29).$ Then
$\cd(G)=\{1,15,28,29,30\}.$ In this case, $\Delta(G)$ has a triangle but $\Gamma(G)$ has none. The
finite group $G$ whose degree graph $\Gamma(G)$ has no triangles has been investigated by Lewis and
White \cite{Lewis-White10}. They showed that for a nonsolvable group $G,$ $\Gamma(G)$ has no
triangles if and only if there is no primes which divides three distinct character degrees of $G.$
Furthermore, if $G$ is any finite group whose degree graph $\Gamma(G)$ has no triangles, then
$|\cd(G)|\leq 6.$ In this paper, we obtain the following result.

\begin{thmA}\label{Groups without triangles} If $G$ is any finite group whose prime graph $\Delta(G)$ has no
triangles, then $\Delta(G)$ has at most $5$ vertices.
\end{thmA}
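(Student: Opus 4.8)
To establish Theorem~A, the plan is to separate the solvable and nonsolvable cases; the solvable case follows at once from \Pal\ combined with Ramsey's theorem, while the nonsolvable case will require the classification of finite simple groups. Suppose first that $G$ is solvable. Since \Pal\ guarantees that among any three vertices of $\Delta(G)$ at least one pair is joined by an edge, the complement graph $\overline{\Delta(G)}$ contains no triangle. As by hypothesis $\Delta(G)$ contains no triangle either, colouring each edge of the complete graph on the vertex set $\rho(G)$ by colour $1$ when it is an edge of $\Delta(G)$ and by colour $2$ otherwise produces a $2$-colouring with no monochromatic triangle; since the Ramsey number $R(3,3)=6$, this forces $|\rho(G)|\leq 5$. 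Hence from now on we may assume that $G$ is nonsolvable and, arguing by contradiction, take $G$ to be a counterexample of least order, so that $\Delta(G)$ has no triangle but $|\rho(G)|\geq 6$.

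The next step is to exploit the fact that $\Delta(\cdot)$ behaves well under normal subgroups and quotients: if $1\neq N\trianglelefteq G$ and $N\neq G$, then $\Delta(N)$ and $\Delta(G/N)$ are induced subgraphs of $\Delta(G)$ and hence triangle-free, so by minimality $|\rho(N)|\leq 5$ and $|\rho(G/N)|\leq 5$. Let $R$ be the solvable radical of $G$. Then $G/R$ has trivial solvable radical, so its socle is a direct product $S_1\times\cdots\times S_m$ of nonabelian simple groups normalised by $G/R$. Using the minimality of $G$ together with Clifford theory and known results relating the character degrees of a group to those of its nonabelian chief factors, one reduces to the situation in which the nonabelian part of $G$ is governed by a single simple group $S$ together with a bounded amount of solvable ``padding'': roughly, a rich portion of the character degree graph of $S$ is forced to appear inside $\Delta(G)$.

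The heart of the argument is then a case analysis over the possibilities for $S$ supplied by the classification. When $S$ is of Lie type of sufficiently large rank, or $S=\textrm{A}_n$ with $n\geq 7$, the known descriptions of $\Delta(S)$ show that $\Delta(S)$ already contains a triangle (in fact a copy of $K_4$), which passes to $\Delta(G)$, a contradiction. This leaves only the small simple groups, namely $\textrm{A}_5$, $\textrm{A}_6$, the sporadic simple groups, and the low-rank families $\PSL_2(q)$, $\PSL_3(q)$, $\PSU_3(q)$, $\PSp_4(q)$, $\mathrm{Sz}(q)$, ${}^2G_2(q)$, $G_2(q)$, ${}^3D_4(q)$, together with their almost simple overgroups. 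For each of these one uses the explicit character degrees of $S$ and of $\Aut(S)$ to show that the relevant prime graph can be triangle-free only when $|\rho|$ is small, and then argues that the solvable radical $R$ can contribute only a bounded number of further primes without creating a triangle: any prime carried by $R$ must, by Ito-Michler and Clifford-theoretic considerations, be joined to enough of the primes dividing degrees of the simple section that an excess of such primes forces a triangle. Adding up in each case yields $|\rho(G)|\leq 5$, the desired contradiction.

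I expect the main obstacle to be exactly this final step, and within it the family $\PSL_2(q)$. Here $\Delta(\PSL_2(q))$ can genuinely be triangle-free with as many as five vertices --- one prime dividing $q$, together with pairs of primes dividing $q-1$ and $q+1$ --- so there is no room to spare. The delicate point is to prove that enlarging $\PSL_2(q)$ by diagonal, field and graph automorphisms and by a solvable normal subgroup cannot introduce a sixth prime while keeping $\Delta(G)$ triangle-free; this needs a careful simultaneous analysis of how the two ``halves'' of $\rho$ coming from $q-1$ and from $q+1$ interact with these automorphisms and with $R$. Carrying out this analysis, together with the parallel (but easier) verifications for the other small Lie type and sporadic cases, is where essentially all of the technical work will reside.
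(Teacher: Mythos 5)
Your solvable case is correct and takes a genuinely different (and slicker) route than the paper's: the paper invokes a structural result of Lewis and Meng to get the sharper conclusion $|\rho(G)|\leq 4$, whereas your observation that \Pal forces the complement of $\Delta(G)$ to be triangle-free, so that $R(3,3)=6$ gives $|\rho(G)|\leq 5$, is all that Theorem~A requires. The nonsolvable case, however, is a plan rather than a proof, and the plan omits the one idea that actually produces the bound. You correctly anticipate the reduction to a single simple section (the paper shows the socle of $G/R$ must be a single copy of some $\PSL_2(q)$, since two nonabelian simple factors would each contribute $\{2,3\}$ to the degree-prime sets and immediately create a triangle), and you correctly isolate $\PSL_2(q)$ as the only surviving family. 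But you then defer exactly the step you describe as ``where essentially all of the technical work will reside,'' namely why the solvable radical cannot contribute a sixth prime.

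The missing mechanism is a dichotomy on $\tau=\rho(G)-\pi(G/R)$. First, any $r\in\tau$ must divide $\theta(1)$ for some $\theta\in\Irr(R)$; one then shows that $\theta$ extends to $G$ --- this step is nontrivial and uses Guralnick's classification of subgroups of prime power index together with a theorem of Higgs/Moret\'{o} to dispose of the non-invariant and invariant-but-non-extendible cases --- and Gallagher's theorem then places $\theta(1)\psi(1)$ in $\cd(G)$ for every $\psi\in\Irr(G/R)$. If any degree of $G/R$ had two distinct prime divisors this would create a triangle through $r$, so every degree of $G/R$ is a prime power, which by the Manz--Staszewski--Willems classification forces $G/R\cong \textrm{A}_5$ or $\PSL_2(8)$, hence $|\pi(G/R)|=3$. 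Second, no two primes of $\tau$ are adjacent (again via Gallagher), so \Pal applied to the solvable group $R$ gives $|\tau|\leq 2$, and $3+2=5$. In the complementary case $\tau=\emptyset$ one has $|\rho(G)|=|\pi(G/R)|\leq 5$ directly from the degree set of $\PSL_2(q)$. Without this dichotomy your sketch has no way to exclude, for instance, $\PSL_2(2^6)$ (already carrying five primes) sitting above a solvable radical that contributes a sixth; the appeal to ``Ito--Michler and Clifford-theoretic considerations'' does not by itself supply the argument.
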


The upper bound for $|\rho(G)|$ obtained in Theorem~A  is best possible as demonstrated in the
following example.

\begin{example}\label{example}

$(1)$ If $G\cong \PSL_2(2^6),$ then $|\rho(G)|=5,$ and $\Delta(G)$ has no triangles. The prime
graph $\Delta(\PSL_2(2^6))$ is isomorphic to the second graph in Figure A. Note that
$\Delta(\PSL_2(2^6))$ is disconnected with three connected components.

$(2)$ Let $G$ be a direct product of a simple group $H$ and a solvable group $K,$ where $H\cong
\textrm{A}_5$ or $\PSL_2(8)$ and $\Delta(K)$ has two connected components and two vertices such
that $\rho(K)\cap \rho(H)=\emptyset.$ Then $|\rho(G)|=5,$ $\Delta(G)$ is connected without
triangles and $\Delta(G)$ is isomorphic to the first graph in Figure A. To give an example of such
a group $K,$ let $L$ be the normalizer in $\PSU_3(23)$ of a Sylow $23$-subgroup of $\PSU_3(23).$
Then we can choose $K$ to be a Hall $\{11,23\}$-subgroup of $L.$ It follows that $K\cong
23^{1+2}:11$ and it is easy to check that $\cd(K)=\{1,11,23\},$ and so $K$ satisfies the required
conditions.
\end{example}

We next obtain a classification of finite graphs with $5$ vertices and having no triangles which
can occur as prime graphs of some finite groups. The structure of such finite groups is also
described in detail. It turns out that there are only two nonisomorphism types of such graphs and
they are given in Figure A. Notice that the existence of these graphs have been established in the
examples given above.

\begin{thmB} Let $G$ be a finite group such that $\Delta(G)$ has exactly $5$
vertices. If $\Delta(G)$ has no triangles, then the following hold.
\begin{enumerate}
\item If $\Delta(G)$ is disconnected, then $G\cong\PSL_2(2^f)\times A,$ where $A$ is abelian, $|\pi(2^f\pm 1)|=2$ and  $\Delta(G)$
is the second graph in Figure A,
\item If $\Delta(G)$ is connected, then $G=H\times K,$ where $H\cong \rm{A}_5$ or $\PSL_2(8),$
 $K$ is a solvable group such that $\Delta(K)$ has exactly two vertices and two connected
components and $\rho(H)\cap \rho(K)$ is empty. Furthermore, $\Delta(G)$ is the first graph in
Figure A.
\end{enumerate}
\end{thmB}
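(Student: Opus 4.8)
The plan is to split the argument according to whether $G$ is solvable, at each stage feeding the hypothesis $|\rho(G)|=5$ into the case analysis that proves Theorem~A. First suppose $G$ is solvable. By P\'alfy's condition $\Delta(G)$ has no independent set of three vertices, so $\Delta(G)$ is a triangle-free graph on $5$ vertices with independence number at most $2$; the only such graph is the cycle $C_5$ (in the unique $2$-colouring of the edges of $K_5$ with no monochromatic triangle both colour classes are copies of $C_5$). Since $C_5$ is not the prime graph of any solvable group---an argument that already occurs in the proof of Theorem~A---the solvable case cannot arise, and from now on $G$ is nonsolvable.

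Suppose next that $\Delta(G)$ is disconnected. Here I would combine the known description of finite groups whose prime graph is disconnected with the analysis of nonsolvable groups having triangle-free $\Delta(G)$ carried out for Theorem~A. A solvable group with disconnected prime graph has exactly two connected components, each a complete graph, and two triangle-free complete graphs span at most $2+2=4<5$ vertices; so the $5$-vertex hypothesis again excludes $G$ solvable, and among the nonsolvable possibilities the only one that survives is that $G$ has a normal section isomorphic to $\PSL_2(2^f)$ (the groups $\PSL_2(q)$ with $q$ odd, $\mathrm{Sz}(2^{2m+1})$ and their almost simple extensions all have a prime graph containing a triangle or with fewer than $5$ vertices). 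One then refines this: a non-abelian normal section forces $f\ge2$; triangle-freeness forces $|\pi(2^f-1)|\le2$ and $|\pi(2^f+1)|\le2$; and having exactly $5$ vertices forces both to be equal to $2$, i.e.\ $|\pi(2^f\pm1)|=2$. Finally, a nontrivial solvable normal subgroup lying above $\PSL_2(2^f)$ would contribute a prime inside one of the complete components $\pi(2^f\pm1)$ and hence a triangle, so it is abelian and (the relevant Schur multiplier being trivial) splits off as a direct factor. This gives $G\cong\PSL_2(2^f)\times A$ with $A$ abelian and $\Delta(G)=K_1\cup K_2\cup K_2$, the second graph in Figure~A.

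Finally suppose $\Delta(G)$ is connected. The analysis behind Theorem~A shows that a nonsolvable group with connected triangle-free $\Delta(G)$ on $5$ vertices has a single non-abelian composition factor, realized by a normal subgroup $S\cong A_5$ or $\PSL_2(8)$: a nonsplit cover such as $\SL_2(5)$ would contribute the degree $6$ and hence the forbidden edge $\{2,3\}$, while for $\PSL_2(2^f)$ with $f\ge4$, or any other nonabelian simple group, $\Delta$ already carries an edge, and the fifth prime needed to connect up $\Delta(G)$ would then close a triangle. Set $C=C_G(S)$. First, $C$ is solvable, for otherwise $G$ would have a normal section $S\times T$ with $T$ nonabelian simple, and one checks that $\Delta(S\times T)$ either has more than $5$ vertices or contains a triangle. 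Next, $G/C$ embeds in $\Aut(S)$; since $\Aut(A_5)=S_5$ and $\Aut(\PSL_2(8))=\PSL_2(8).3$ each have a character degree divisible by the product of two of the three primes of $\rho(S)$, while the extra primes contributed by $C$ are adjacent in $\Delta(G)$ to every prime of $\rho(S)$, an outer automorphism would create a triangle; hence $G/C=S$ and $G=S\times C$. It remains to note that in a direct product every prime of $\rho(S)$ is joined to every prime of $\rho(C)$, so any edge of $\Delta(C)$ together with a prime of $\rho(S)$ other than its two endpoints (which exists since $|\rho(S)|=3$) would form a triangle; thus $\Delta(C)$ is edgeless, P\'alfy's condition gives $|\rho(C)|\le2$, and reaching $5$ vertices forces $|\rho(C)|=2$ and $\rho(S)\cap\rho(C)=\emptyset$. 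Therefore $\Delta(G)=K_{2,3}$, the first graph in Figure~A, and writing $H:=S$, $K:=C$ we obtain $G=H\times K$ with $H\cong A_5$ or $\PSL_2(8)$, $K$ solvable, $\Delta(K)$ on two vertices with two connected components, and $\rho(H)\cap\rho(K)=\emptyset$, as claimed.

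The main obstacle is the nonsolvable structure theory: pinning down the non-abelian composition factor and using information on Schur multipliers and outer automorphism groups to force it to sit inside $G$ as a direct factor, together with---in the disconnected case---invoking the classification of groups with disconnected prime graph and checking that the $5$-vertex triangle-free constraint collapses it to $\PSL_2(2^f)\times A$. A smaller but essential point is the non-realizability of $C_5$ as the prime graph of a solvable group.
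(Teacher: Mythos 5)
Your overall architecture (exclude solvable $G$, split on connectedness, aim for $\PSL_2(2^f)\times A$ resp. $(\mathrm{A}_5$ or $\PSL_2(8))\times K$) matches the shape of the truth, but the load-bearing steps are asserted rather than proved, and at those points the shortcuts you offer do not work as stated. In the connected case, everything hinges on your claim that ``the extra primes contributed by $C$ are adjacent in $\Delta(G)$ to every prime of $\rho(S)$'': you use this to forbid an edge inside $\rho(S)$, hence to exclude covers such as $\SL_2(5)$ and outer automorphisms, hence to get $G=S\times C$. But that adjacency is only obvious \emph{after} one has a direct product; beforehand it is exactly the hard point. In the paper it is obtained from Lemma~\ref{Extendible characters} (every $\theta\in\Irr(N)$ whose degree involves a prime outside $\pi(G/N)$ extends to $G$, proved via Guralnick's classification of prime-power-index subgroups, Moret\'o's theorem and character triples) together with Gallagher's Theorem, and the same extendibility statement applied to linear characters of $H\cap N$ is what forces the perfect subgroup $H$ to intersect the solvable radical trivially (Step~4). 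Your sketch supplies no substitute for this, so the exclusion of nonsplit extensions (which are not only central covers: $N$ is an arbitrary solvable radical with an action, not just a Schur multiplier issue) and the passage to $G=S\times C$ are genuine gaps.

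The disconnected case has the same problem in sharper form. You treat ``disconnected'' as if it were the three-component situation; the two-component subcase (isolated vertex $2$, with $\pi(2^f-1)$ and $\pi(2^f+1)$ joined, or an $\mathrm{A}_5/\PSL_2(8)$ section with two extra primes from $N$) is never addressed, and in the paper it consumes most of the work: Lemma~\ref{Quotient} shows, via the indices of maximal subgroups of $\PSL_2(2^f)$, Zsigmondy primes (Lemma~\ref{2-powers}), extendibility and Gallagher, that a nonabelian solvable radical under $\PSL_2(2^f)$ with $|\rho(G)|=5$ and at most two components forces a triangle. Your one-line version of this (``a nontrivial solvable normal subgroup \dots would contribute a prime inside one of the complete components and hence a triangle'') is unsubstantiated, and your follow-up ``it is abelian and, the Schur multiplier being trivial, splits off as a direct factor'' is false as a general principle: an abelian normal subgroup with nontrivial action (e.g.\ a module extension $2^{2f}\colon\PSL_2(2^f)$) does not split off, and ruling such groups out again needs character-theoretic input or the citation of Lewis--White \cite[Theorem~4.1]{Lewis-White03}, which is how the paper handles the three-component case. (The solvable reduction is fine, though you can get it directly from Lemma~\ref{Square or Triangle} without invoking the pentagon result.)
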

\setlength{\unitlength}{1.6mm}\vspace*{0.4cm}
\begin{center}
\begin{picture}(30,10)

    \put(-5,10){\circle*{1}}
    \put(-5,0){\circle*{1}}
    \put(5,0){\circle*{1}}
    \put(5,10){\circle*{1}}
    \put(0,5){\circle*{1}}

    \put(-5,10){\line(1,0){10}}
    \put(-5,0){\line(0,1){10}}
    \put(5,0){\line(-1,0){10}}
    \put(5,0){\line(0,1){10}}
    \put(0,5){\line(1,1){5}}
    \put(0,5){\line(-1,-1){5}}


    \put(15,10){\circle*{1}}
    \put(15,0){\circle*{1}}
    \put(25,0){\circle*{1}}
    \put(25,10){\circle*{1}}
    \put(20,5){\circle*{1}}

    \put(15,10){\line(1,0){10}}
    \put(15,0){\line(1,0){10}}
    \put(10,-5){\title{Figure A}}
    \end{picture}
\end{center}
\vspace*{0.8cm}

 It is worth mentioning that if $G$ is any finite group whose prime graph is the first graph in
Figure A, then the structure of $G$ is given in Case $(2)$ of Theorem~B.

One of the main questions in this area is to determine which finite graph could be or could not be
the prime graph of some finite group. For example, Moret\'{o} and Tiep \cite{Moreto-Tiep} showed
that an octagon cannot be a prime graph of any finite groups. Lewis and White \cite{Lewis-White12}
proved that a path with $4$ vertices cannot be a prime graph of any finite groups. Also, the
pentagon has been proved not to be a prime graph of any solvable group by M. Lewis \cite{Lewis03}.
However, it is easy to verify that any other cycles or trees with at most $4$ vertices can occur as
prime graphs of finite groups. Furthermore, it is proved in \cite{Lewis-Meng,Lewis-White12} that if
$\Delta(G)$ is a square, then $G$ must be a direct product and in particular is solvable. Recall
that a tree is a simple connected graph without any cycles. Notice that by definition, a cycle must
have at least $3$ vertices. As a consequence of our previous theorems, we show that these are in
fact the only cycles or trees which can be the prime graphs of some finite groups.

\begin{thmC}
Let $G$ be a finite group. If $\Delta(G)$ is a cycle or a tree, then $\Delta(G)$ has at most $4$
vertices.
\end{thmC}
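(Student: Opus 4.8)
The plan is to reduce Theorem~C entirely to Theorems~A and~B. Let $n=|\rho(G)|$ be the number of vertices of $\Delta(G)$. I would first dispose of the one situation in which a cycle can contain a triangle: if $\Delta(G)$ is the $3$-cycle, then $n=3\leq 4$ and we are done. In every other case, namely when $\Delta(G)$ is a tree or a cycle on at least $4$ vertices, the graph $\Delta(G)$ contains no triangle (a tree is acyclic, and a cycle of length $\geq 4$ has girth $\geq 4$). Theorem~A then gives $n\leq 5$, so it only remains to rule out $n=5$.

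Assume now that $\Delta(G)$ has exactly $5$ vertices and no triangle. By Theorem~B, $\Delta(G)$ is one of the two graphs in Figure~A, and I would exclude each of them as a cycle or tree using elementary invariants. The second graph in Figure~A is disconnected (a disjoint union of two edges and an isolated vertex), hence it is neither a tree nor a cycle, both of which are connected. The first graph in Figure~A is connected but has $6$ edges on $5$ vertices; since a tree on $5$ vertices has exactly $4$ edges and a cycle on $5$ vertices has exactly $5$ edges, it can be neither. (Equivalently, it has a vertex of degree $3$, so it is not a cycle, while it contains a $4$-cycle, so it is not a tree.) Thus no finite group can have a $5$-vertex cycle or tree as its prime graph, and together with the previous paragraph this forces $n\leq 4$ whenever $\Delta(G)$ is a cycle or a tree.

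I do not expect a genuine obstacle here: the whole content of Theorem~C is the observation that neither of the two exceptional triangle-free $5$-vertex graphs classified in Theorem~B happens to be a cycle or a tree, so the argument is pure bookkeeping once Theorems~A and~B are in hand. The only point requiring a moment's care is the preliminary case split, since a $3$-cycle \emph{is} a triangle and therefore "being a cycle" does not by itself imply "triangle-free"; but as that case already satisfies the desired bound, it costs nothing.
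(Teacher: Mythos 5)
Your proposal is correct and follows essentially the same route as the paper: invoke Theorem~A to bound the triangle-free case by $5$ vertices, then use Theorem~B to see that neither graph in Figure~A is a cycle or a tree. Your extra care about the $3$-cycle (which is itself a triangle) and the explicit edge/connectivity counts for the Figure~A graphs are fine refinements of the same argument, not a different approach.
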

We note that if the prime graph of a finite group is a cycle of length four, then the group must be
solvable. However if $\Delta(G)$ is a triangle, then $G$ need not be solvable. For instance, the
prime graph of the nonabelian simple group $\PSL_3(3)$ is a triangle. Another consequence of the
main theorems is that if $G$ is a finite group whose prime graph $\Delta(G)$ is a bipartite graph
$K_{m,n},$ where $1\leq m\leq n,$ then $|\rho(G)|=m+n\leq 5.$ Furthermore, if $m+n=5,$ then
$\Delta(G)$ is the first graph in Figure A; and if $m+n\leq 4,$ then all possibilities for $m$ and
$n$ can occur.

\begin{notation}
Throughout this paper, all groups are finite and all characters are complex characters. If $n\geq
1$ is an integer, then we denote the set of all prime divisors of $n$ by $\pi(n).$  If $G$ is a
group, then we write $\pi(G)$ instead of $\pi(|G|)$ for the set of all prime divisors of the order
of $G.$   If $N\unlhd G$ and $\theta\in\Irr(N),$ then the inertia group of $\theta$ in $G$ is
denoted by $I_G(\theta).$ We write $\Irr(G|\theta)$ for the set of all irreducible constituents of
$\theta^G.$ Recall that a group $G$ is said to be an {almost simple group} with socle $S$ if there
exists a nonabelian simple group $S$ such that $S\unlhd G\leq \Aut(S).$ For $\epsilon=\pm,$ we use
the convention that $\textrm{PSL}_n^\epsilon(q)$ is $\textrm{PSL}_n(q)$ if $\epsilon=+$ and
$\textrm{PSU}_n(q)$ if $\epsilon=-.$ The greatest common divisor of two integers $a$ and $b$ is
$\gcd(a,b).$ Other notation is quite standard.
\end{notation}


\section{Preliminaries}

In this section, we present some results that will be needed for the proofs of our theorems. We
first begin with a result due to P. P\'{a}lfy which gives a restriction on the prime  graphs of
solvable groups. This condition will be very useful in determining which graph could be the prime
graphs of solvable groups.

\begin{lemma}\emph{(P\'{a}lfy's Condition~\cite[Theorem]{Palfy}).}\label{Palfy condition} Let $G$ be a solvable group and
let  $\pi$ be a set of primes contained in $\rho(G)$ with $|\pi|=3.$ Then there exists an
irreducible character of $G$ with degree divisible by at least two primes from $\pi.$
\end{lemma}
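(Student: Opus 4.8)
The plan is to argue by contradiction: let $G$ be a counterexample of smallest possible order, so that $G$ is solvable, $\pi=\{p,q,r\}\subseteq\rho(G)$, yet every $\chi\in\Irr(G)$ has $\chi(1)$ divisible by at most one prime of $\pi$ (note that $\pi$ need not be all of $\rho(G)$). Fix ``witness'' characters $\alpha,\beta,\gamma\in\Irr(G)$ with $p\mid\alpha(1)$, $q\mid\beta(1)$, $r\mid\gamma(1)$; by the assumption, $\alpha(1)$ is prime to $qr$, $\beta(1)$ is prime to $pr$, and $\gamma(1)$ is prime to $pq$.

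The first moves are the standard reductions through a minimal normal subgroup $N$ of $G$, which is elementary abelian of order a power of some prime $t$. Since $\cd(G/N)\subseteq\cd(G)$, the quotient $G/N$ again has no character degree divisible by two primes of $\pi$, so minimality forces $\{p,q,r\}\not\subseteq\rho(G/N)$; relabelling, $p\notin\rho(G/N)$. By the Ito--Michler theorem $G/N$ has a normal abelian Sylow $p$-subgroup, and moreover every $\chi\in\Irr(G)$ with $p\mid\chi(1)$ satisfies $N\not\le\ker\chi$, in particular $N\not\le\ker\alpha$. Pushing these reductions further, I would try to reach the situation in which $N=F(G)$ is the unique minimal normal subgroup, $\Centralizer_G(N)=N$, and $G/N$ acts faithfully and irreducibly on $N$.

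The core of the argument is Clifford theory over $N$. Pick $1_N\ne\theta\in\Irr(N)$ lying under $\alpha$ and set $T=I_G(\theta)$. If $T<G$, I would induct on $T$ --- more precisely on the character triple $(T,N,\theta)$: each $\psi\in\Irr(T|\theta)$ yields $[G:T]\,\psi(1)\in\cd(G)$, so no such $\psi(1)$ is divisible by two primes of $\pi$, and the task is to verify that $p$, $q$, $r$ still appear suitably at the level of $T$ so as to contradict minimality --- comparing divisibility inside $\Irr(T|\theta)$ with $\rho$ of the relevant sections is the delicate point. If instead $T=G$, then $\theta$ is $G$-invariant; via a character-triple isomorphism we may assume $N$ is central and $\theta$ is a faithful linear character, so Gallagher's theorem gives $\{\psi(1):\psi\in\Irr(G|\theta)\}=\cd(G/N)$ and hence $p\mid\alpha(1)\in\cd(G/N)$ contradicts $p\notin\rho(G/N)$ once the triple splits. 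In the remaining (projective) case one works with the faithful primitive action of $G/N$ on $\Irr(N)$: the conditions $q,r\in\rho(G)$ become the existence of $G$-orbits on $\Irr(N)$ with point-stabilisers of index divisible by $q$ and by $r$ respectively, and one invokes structural results on faithful completely reducible solvable linear groups (orbit and stabiliser theorems in the spirit of Gluck and Manz--Wolf) to force two of $p,q,r,t$ into a common character degree.

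The main obstacle is exactly this last case, $I_G(\theta)=G$: reducing to a primitive faithful linear action of $G/N$ on the module $N$ and extracting a contradiction from its orbit structure. The preceding steps --- the Ito--Michler reduction, the Clifford correspondence, Gallagher's theorem --- are routine; the genuinely new content of P\'{a}lfy's theorem lies in the combinatorics of that module action, which is where I expect most of the effort to be needed.
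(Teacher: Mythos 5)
The paper does not prove this lemma at all: it is quoted verbatim as P\'alfy's theorem with a citation to \cite[Theorem]{Palfy}, so there is no internal proof to compare with, and your task was in effect to reprove a nontrivial external result. Judged on its own terms, your sketch has genuine gaps exactly where the substance of P\'alfy's theorem lies. First, the inductive step at $T=I_G(\theta)<G$ does not set up a smaller instance of the same statement: the primes $q$ and $r$ are witnessed by characters $\beta,\gamma$ that lie over characters of $N$ other than $\theta$, so there is no reason why $q$ or $r$ should divide any degree in $\Irr(T\mid\theta)$, nor even belong to $\rho(T)$; the ``delicate point'' you flag is not a verification to be checked but the place where this induction simply fails as formulated, and a different inductive invariant would be needed. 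Second, the reductions you hope to ``push further'' (unique minimal normal subgroup, $\Centralizer_G(N)=N$, $G/N$ faithful and irreducible on $N$) are asserted rather than derived; with two distinct minimal normal subgroups $N_1,N_2$ the minimality hypothesis only gives that each $\rho(G/N_i)$ omits some prime of $\pi$, which is not an immediate contradiction, so this standard-looking reduction needs an actual argument here.

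Third, and most importantly, the case $T=G$ with $\theta$ fully ramified (the non-split character triple) and the subsequent analysis of the primitive action of $G/N$ on $\Irr(N)$ are left entirely to an appeal to unspecified ``orbit and stabiliser theorems in the spirit of Gluck and Manz--Wolf.'' That is precisely where P\'alfy's argument does its real work; without a concrete statement invoked and a derivation that two of $p,q,r$ (not merely two of $p,q,r,t$, note that $t$ need not lie in $\pi$) divide a common degree, nothing has been proved. In its current form the proposal is a plausible opening frame (Ito--Michler, Clifford theory, Gallagher) around a hole that coincides with the theorem itself; to complete it you would either have to reconstruct P\'alfy's actual argument or supply the missing module-theoretic analysis explicitly.
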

The next result is an application of P\'{a}lfy's Condition. This gives a proof of Theorem~A for
solvable groups.
\begin{lemma}\label{Square or Triangle}  If $G$ is solvable and $\Delta(G)$ has no triangles, then $|\rho(G)|\leq 4.$
\end{lemma}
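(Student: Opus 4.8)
The plan is to use P\'alfy's Condition (Lemma~\ref{Palfy condition}) directly as a combinatorial constraint on $\Delta(G)$. Suppose, for contradiction, that $G$ is solvable, $\Delta(G)$ has no triangles, and $|\rho(G)| \geq 5$. Pick five distinct primes $p_1, p_2, p_3, p_4, p_5 \in \rho(G)$. The key observation is that P\'alfy's Condition says: for \emph{every} $3$-element subset $\pi \subseteq \rho(G)$, at least one of the three pairs inside $\pi$ is an edge of $\Delta(G)$. In graph-theoretic terms, the complement graph $\overline{\Delta(G)}$ restricted to these five vertices contains no triangle, while the "no triangles" hypothesis says $\Delta(G)$ itself contains no triangle on these five vertices. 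So I would reduce the statement to a Ramsey-type fact: a graph on $5$ vertices such that neither it nor its complement contains a triangle cannot exist, because the Ramsey number $R(3,3) = 6$ — wait, $R(3,3)=6$ means on $5$ vertices one \emph{can} $2$-color edges avoiding monochromatic triangles (the $5$-cycle does it). So a naive Ramsey argument gives nothing; the real content must come from combining P\'alfy's Condition with \emph{more} than just the triangle-free hypothesis on a single $5$-set.

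The correct approach is to extract the full strength of P\'alfy's Condition, which applies to all solvable sections, not just to $G$ on a fixed $5$-set. So here is the refined plan. Assume $|\rho(G)| \geq 5$ with $\Delta(G)$ triangle-free. First, I would note that $\Delta(G)$ triangle-free forces each vertex to have "small" neighborhoods in a controlled way. Then I would invoke P\'alfy's Condition in the contrapositive form: $\Delta(G)$ has no "independent set of size $3$", i.e., no three primes that are pairwise non-adjacent. Combining "$\Delta(G)$ is triangle-free" with "$\Delta(G)$ has no independent set of size $3$" on the vertex set $\rho(G)$: by Ramsey's theorem $R(3,3)=6$, any graph on $\geq 6$ vertices must contain a triangle or an independent set of size $3$; hence $|\rho(G)| \leq 5$. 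This already gets us to $5$, but we want $\leq 4$, so we need to rule out the one exceptional configuration on $5$ vertices where a graph has no triangle and no independent $3$-set — namely the $5$-cycle $C_5$, which is the unique such graph. So the remaining task is: show $\Delta(G)$ cannot be $C_5$ for $G$ solvable.

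To rule out $C_5$, I would again apply P\'alfy's Condition but this time to a carefully chosen \emph{section} or subgroup, or exploit a known deeper structure theorem for solvable groups with $\Delta(G)$ having many vertices. A clean route: it is a known result (essentially due to Lewis, see the discussion of \cite{Lewis03} in the introduction) that the pentagon $C_5$ is not the prime graph of any solvable group — the paper explicitly cites this. So I would simply quote that fact: since $C_5$ is excluded for solvable groups, the only remaining triangle-free graph on $5$ vertices avoiding independent $3$-sets is eliminated, whence $|\rho(G)| \leq 4$. If a self-contained argument were wanted instead of citing, one would assume $\Delta(G) = C_5$ with vertices $p_1 - p_2 - p_3 - p_4 - p_5 - p_1$, use the Ito--Michler and Thompson-type machinery together with the structure of solvable groups whose degree graph has a prescribed shape, and derive a contradiction from the precise constraints P\'alfy imposes on the three pairwise non-adjacent-ish configurations; but the cited pentagon result does exactly this.

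The main obstacle is exactly the last step: the Ramsey argument is essentially free and gives $|\rho(G)| \leq 5$ immediately, but squeezing out the improvement to $\leq 4$ requires knowing that $C_5$ (the pentagon) is not a prime graph of a solvable group, which is a genuine theorem rather than a combinatorial triviality. So the proof is really "Ramsey + P\'alfy" for the bound $5$, plus an appeal to Lewis's pentagon result (or a reproof thereof) to get down to $4$. I would structure the write-up as: (i) state the two combinatorial properties of $\Delta(G)$ (triangle-free by hypothesis, no independent $3$-set by P\'alfy); (ii) invoke $R(3,3)=6$ to get $|\rho(G)|\leq 5$; (iii) observe $C_5$ is the unique remaining graph and cite/prove that it is impossible for solvable $G$; (iv) conclude $|\rho(G)|\leq 4$.
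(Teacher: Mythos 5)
Your proof is correct, but it follows a different route from the paper. The paper's own proof is a one-liner: it quotes Lemma~2.1 of Lewis--Meng \cite{Lewis-Meng}, which says that a solvable group whose prime graph is triangle-free and has at least $4$ vertices must have $\Delta(G)$ equal to a square, so $|\rho(G)|=4$ at once (and this stronger structural fact is reused later in the paper). You instead extract the purely combinatorial content of P\'alfy's Condition (no independent set of size $3$ in $\Delta(G)$), combine it with the triangle-free hypothesis and $R(3,3)=6$ to get $|\rho(G)|\leq 5$, correctly identify $C_5$ as the unique triangle-free graph on $5$ vertices with independence number at most $2$ (max degree $\leq 2$ by triangle-freeness, then only the $5$-cycle avoids an independent triple), and then eliminate $C_5$ by citing Lewis's theorem that the pentagon is not the prime graph of a solvable group \cite{Lewis03} -- a result the paper itself quotes in its introduction, so the appeal is legitimate and not circular. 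The trade-off: your argument makes the elementary Ramsey-type step explicit and only leans on outside results where genuinely necessary, but it needs two inputs (P\'alfy plus the pentagon exclusion) and does not by itself recover the paper's extra conclusion that the graph is a square when $|\rho(G)|=4$; the paper's citation of \cite{Lewis-Meng} buys both the bound and that structure in one step. Your self-aware remark that the naive Ramsey argument on a single $5$-set gives nothing, and that the real work is the $C_5$ exclusion, is exactly right.
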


\begin{proof} Let $G$ be a solvable group. If $|\rho(G)|\leq 3,$ then we are done. Assume that $\Delta(G)$
has no triangles and $|\rho(G)|\geq 4.$ By \cite[Lemma~2.1]{Lewis-Meng}, $\Delta(G)$ is a square
and so $|\rho(G)|=4.$ Thus if $G$ is solvable and $\Delta(G)$ has no triangles, then $|\rho(G)|\leq
4.$
\end{proof}

The following number theoretic result due to Zsigmondy is very useful.
\begin{theorem}\emph{(Zsigmondy's Theorem~\cite{Zsigmondy}).}
Let $a\geq 2$ and $n\geq 2$ be integers. Then there exists a prime $\ell$ such that $\ell$ divides
$a^n-1$ but it does not divide $a^m-1$ for any $1\leq m<n$ unless
\begin{enumerate}
\item $n=6$ and $a=2$ or
\item $n=2$ and $a=2^r-1$ is a Mersenne prime, where $r$ is a prime.
\end{enumerate}

\end{theorem}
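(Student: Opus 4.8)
The plan is to prove the existence of a primitive prime divisor of $a^n-1$ by the classical cyclotomic-polynomial method, working throughout with $\Phi_n$, the $n$-th cyclotomic polynomial. The starting point is the factorisation $a^n-1=\prod_{d\mid n}\Phi_d(a)$. Call a prime $\ell$ a \emph{primitive prime divisor} of $a^n-1$ if $\ell\mid a^n-1$ but $\ell\nmid a^m-1$ for every $1\le m<n$; this is equivalent to saying that the multiplicative order of $a$ modulo $\ell$ equals $n$, and, using the factorisation, it happens exactly when $\ell\mid\Phi_n(a)$ and $\ell\nmid n$. Thus the theorem is equivalent to the assertion that $\Phi_n(a)$ has a prime divisor not dividing $n$, outside the two listed cases.

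The second step is to understand the prime divisors of $\Phi_n(a)$ that do divide $n$. If $\ell\mid\Phi_n(a)$ and $d$ denotes the order of $a$ modulo $\ell$, then $d\mid n$; if $d=n$ then $n\mid\ell-1$ and $\ell$ is primitive, whereas if $d<n$ a short argument with the factorisation forces $n/d$ to be a power of $\ell$, so $\ell\mid n$. From this I would deduce the standard facts that at most one prime divides $\gcd(n,\Phi_n(a))$, that when it exists it is the largest prime factor $p$ of $n$, and --- via the lifting-the-exponent lemma for the $p$-adic valuation --- that $p^2\nmid\Phi_n(a)$ unless $n=2$ and $p=2$. Consequently, if $a^n-1$ has no primitive prime divisor, then for $n\ge 3$ we must have $\Phi_n(a)\in\{1,p\}$ with $p\mid n$, hence $\Phi_n(a)\le n$; and for $n=2$ we must have $\Phi_2(a)=a+1$ a power of $2$, i.e.\ $a=2^r-1$, which is genuinely exceptional because then every prime divisor of $a^2-1$ already divides $a-1$ --- this is case~(2).

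The third step is the size estimate that rules out $\Phi_n(a)\le n$ for $n\ge 3$ except when $(a,n)=(2,6)$. I would use the product expansion $\Phi_n(a)=\prod_{\zeta}(a-\zeta)$ over the primitive $n$-th roots of unity: since $|a-\zeta|\ge a-1$ for every root of unity $\zeta$, with strict inequality whenever $\zeta\ne 1$, we obtain $\Phi_n(a)>(a-1)^{\varphi(n)}$ for every $n\ge 2$. For $a\ge 3$ this forces $2^{\varphi(n)}<n$, which holds only for finitely many small $n$, each then disposed of by a direct computation showing $\Phi_n(a)>n$ anyway. The case $a=2$ is the delicate one, since the bound $(a-1)^{\varphi(n)}$ is then vacuous; here I would instead establish a sharper estimate --- for instance $\Phi_n(2)>n$ for all $n$ beyond a small threshold, obtained by isolating the factor $|2-\zeta|$ for the primitive root $\zeta$ nearest to $1$ and bounding the remaining $\varphi(n)-1$ factors crudely from below --- and then check the finitely many remaining $n$ by hand, which leaves precisely $n=6$, that is $(a,n)=(2,6)$ (case~(1)), as the surviving exception.

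I expect the main obstacle to be exactly this quantitative last step when $a=2$: the clean inequality $\Phi_n(a)>(a-1)^{\varphi(n)}$ collapses, so one must analyse $|\Phi_n(2)|$ more carefully as a product of distances from $2$ to roots of unity, and one is left with a short but genuine finite case check at the borderline values of $n$. The other point needing care is the $p=2$ bookkeeping in the valuation step, since the lifting-the-exponent lemma behaves differently at the prime $2$; this irregularity is exactly what the size-comparison argument cannot see, and it is the source of the $n=2$ exception.
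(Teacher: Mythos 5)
The paper does not actually prove this statement: Zsigmondy's theorem is quoted as a classical fact with a citation to \cite{Zsigmondy}, so there is no internal proof to compare yours against. Your outline is the standard cyclotomic-polynomial argument (in the Birkhoff--Vandiver style) and its skeleton is sound: from $a^n-1=\prod_{d\mid n}\Phi_d(a)$, a prime $\ell\mid\Phi_n(a)$ with $\ell\nmid n$ is primitive; a non-primitive prime divisor of $\Phi_n(a)$ must be the largest prime $p$ dividing $n$ and, for $n\ge 3$, divides $\Phi_n(a)$ only to the first power; hence failure of the theorem forces $\Phi_n(a)\le p\le n$ when $n\ge 3$, or $a+1$ a power of $2$ when $n=2$; and the estimate $\Phi_n(a)>(a-1)^{\varphi(n)}$ disposes of all but a short list of cases when $a\ge 3$. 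The only thin spot is the one you flag yourself, namely $a=2$: rather than isolating the primitive root nearest to $1$, the cleanest finish is the M\"obius product $\Phi_n(2)=2^{\varphi(n)}\prod_{d\mid n}\bigl(1-2^{-d}\bigr)^{\mu(n/d)}\ge 2^{\varphi(n)}\prod_{k\ge 1}\bigl(1-2^{-k}\bigr)>2^{\varphi(n)-2}$, which combined with $\Phi_n(2)\le n$ leaves only a small finite check, with $n=6$ the sole survivor. One further remark in your favour: for $n=2$ your argument produces the exception in its correct form, ``$a+1$ is a power of $2$,'' which is strictly wider than the phrasing in the statement ($a$ a Mersenne prime with prime exponent); for example $a=63$, $n=2$ also has no primitive prime divisor. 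The discrepancy is harmless here, since the paper only invokes the theorem for $a=2$, but your version is the accurate one.
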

Such a prime $\ell$ is called a {\bf primitive prime divisor}. For fixed $a$ and $n,$ the smallest
primitive prime divisor of $a^n-1$ (if exists) is denoted by $\ell_n(a).$

The next lemma is an easy consequence of Zsigmondy's Theorem.

\begin{lemma}\label{2-powers} Let $f\geq 6$ be an integer. Suppose that
$f=nb,$ where $n\geq 3$ is a prime and $b\geq 1$ is an integer. Then ${(2^{2f}-1)}/{(2^{2b}-1)}$
cannot be a prime power.
\end{lemma}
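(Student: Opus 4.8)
The plan is to rewrite the quotient in a more transparent form and then exhibit two distinct prime divisors by invoking Zsigmondy's Theorem twice. Setting $q=2^{2b}$, one has
\[
\frac{2^{2f}-1}{2^{2b}-1}=\frac{q^{n}-1}{q-1}=1+q+q^{2}+\cdots+q^{n-1}=:N,
\]
so, since $N>1$, it suffices to show that $N$ is divisible by at least two distinct primes. The one elementary observation that drives everything is the inequality $2b<f$: indeed $f=bn\geq 3b>2b$ because $n\geq 3$. This is exactly what makes a primitive prime divisor of $2^{f}-1$ available as a divisor of $N$.

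First I would apply Zsigmondy's Theorem to $2^{2f}-1$. Since $f\geq 6$ we have $2f\geq 12$, so neither exceptional case can occur, and there is a primitive prime divisor $r:=\ell_{2f}(2)$ of $2^{2f}-1$. Because $2b<2f$, the prime $r$ does not divide $2^{2b}-1$, and hence $r$ divides $N=(2^{2f}-1)/(2^{2b}-1)$. Next I would produce a second prime divisor of $N$ from $2^{f}-1$. For base $2$ the only relevant Zsigmondy exception is $f=6$, so assume first $f\neq 6$; then $2^{f}-1$ has a primitive prime divisor $s:=\ell_{f}(2)$. Since $2b<f$, $s$ does not divide $2^{2b}-1$, while $s$ divides $2^{f}-1$ and hence $2^{2f}-1$, so $s\mid N$. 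Moreover $s\neq r$, because primitivity of $r$ gives $r\nmid 2^{m}-1$ for every $m<2f$, in particular $r\nmid 2^{f}-1$, whereas $s\mid 2^{f}-1$. Thus $N$ has two distinct prime divisors $r$ and $s$ and cannot be a prime power.

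It then remains only to dispose of the single exceptional value $f=6$, which forces $b=2$ and $n=3$; here a direct computation gives $N=(2^{12}-1)/(2^{4}-1)=4095/15=273=3\cdot 7\cdot 13$, which is plainly not a prime power. Combining this with the generic case completes the proof.

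I do not expect a genuine obstacle here: the argument is essentially bookkeeping with Zsigmondy primes. The only points requiring a little care are verifying that the two primes extracted are honestly distinct (handled by comparing the exponents to which they are primitive) and confirming that the hypothesis ``$n$ prime'' is used only through $n\geq 3$, which is what guarantees $2b<f$ and hence that a primitive prime divisor of $2^{f}-1$ survives into the quotient $N$; together with checking the lone small exception $f=6$ by hand.
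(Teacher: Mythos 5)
Your argument is correct and is essentially the paper's own proof: dispose of $f=6$ by computing $(2^{12}-1)/(2^4-1)=3\cdot 7\cdot 13$, and otherwise use the two Zsigmondy primes $\ell_{2f}(2)$ and $\ell_f(2)$, which are distinct and, since $2b<f$, do not divide $2^{2b}-1$, hence both divide the quotient. The only cosmetic difference is that you show directly that the quotient has two prime divisors, while the paper phrases the same step as a contradiction with the assumed prime power form.
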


\begin{proof}
By way of contradiction, assume that $(2^{2f}-1)/(2^{2b}-1)=r^m,$ where $r$ is a prime, and $m\geq
1$ is an integer. If $f=6,$ then $n=3$ and $b=2.$ Thus
${(2^{2f}-1)}/{(2^{2b}-1)}={(2^{12}-1)}/{(2^4-1)}=3\cdot 7\cdot 13$ is not a prime power. Hence we
can assume that $f>6$ and thus $2f>f>6.$ By Zsigmondy's Theorem, the primitive prime divisors
$\ell_{2f}(2)$ and $\ell_{f}(2)$ exist. Furthermore, these two primes  are distinct  and do not
divide $2^{2b}-1$ since $f>2b.$  Therefore, both $\ell_{2f}(2)$ and $\ell_{f}(2)$ must divide
$r^m,$ which is impossible.
\end{proof}

 The following result due to Gallagher will be used frequently.

 \begin{lemma}\emph{(Gallagher's Theorem~\cite[Corollary~6.17]{Isaacs}).}\label{Gallagher theorem}
 Let $G$ be a group and let $N\unlhd G.$ If $\theta\in\Irr(N)$ is extendible to
 $\theta_0\in\Irr(G),$ then the characters $\theta_0\lambda$ for $\lambda\in\Irr(G/N)$ are all of the irreducible
 constituents of $\theta^G.$  In particular, $\theta(1)\lambda(1)\in\cd(G)$ for all $\lambda\in\Irr(G/N).$
 \end{lemma}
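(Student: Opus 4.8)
The plan is to establish the irreducibility of the products $\theta_0\lambda$ by a short inner-product computation and then to recover the fact that they form the complete set of irreducible constituents of $\theta^G$ by a degree count. Throughout I would regard each $\lambda\in\Irr(G/N)$ as inflated to a character of $G$, so that $\lambda$ takes the constant value $\lambda(1)$ on $N$; then $(\theta_0\lambda)_N=\lambda(1)\theta$, so $\theta_0\lambda$ is a genuine character of $G$ lying over $\theta$, and by Frobenius reciprocity $[\theta_0\lambda,\theta^G]_G=[(\theta_0\lambda)_N,\theta]_N=\lambda(1)$.

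For the irreducibility I would introduce the auxiliary character $\Xi:=\theta_0\overline{\theta_0}-1_G$. Since $\theta_0\in\Irr(G)$ we have $[\theta_0\overline{\theta_0},1_G]_G=[\theta_0,\theta_0]_G=1$, so $\Xi$ is a character of $G$, and restricting to $N$ gives $\Xi_N=\theta\overline{\theta}-1_N$, again a character because $[\theta\overline{\theta},1_N]_N=1$. The key observation is that $[\Xi_N,1_N]_N=0$, so by Clifford's theorem no irreducible constituent of $\Xi$ has $1_N$ among the constituents of its restriction to $N$; since every irreducible constituent of an inflated character $\mu\overline{\lambda}$ does lie over $1_N$, this forces $[\Xi,\mu\overline{\lambda}]_G=0$ for all $\lambda,\mu\in\Irr(G/N)$. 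Consequently
\[
[\theta_0\lambda,\theta_0\mu]_G=[\theta_0\overline{\theta_0},\mu\overline{\lambda}]_G=[1_G,\mu\overline{\lambda}]_G+[\Xi,\mu\overline{\lambda}]_G=[\mu,\lambda]_{G/N},
\]
so taking $\mu=\lambda$ gives $\theta_0\lambda\in\Irr(G)$ and taking $\mu\neq\lambda$ gives that the $\theta_0\lambda$ are pairwise distinct.

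To finish I would argue that these exhaust $\Irr(\theta^G)$: each $\theta_0\lambda$ appears in $\theta^G$ with multiplicity at least $\lambda(1)$ by the reciprocity computation above, and the $\theta_0\lambda$ are distinct irreducibles, so $\theta^G-\sum_{\lambda\in\Irr(G/N)}\lambda(1)\,\theta_0\lambda$ is a character (possibly zero), and its degree is $\theta(1)[G:N]-\theta_0(1)\sum_{\lambda}\lambda(1)^2=\theta(1)[G:N]-\theta(1)|G/N|=0$; hence it vanishes, $\theta^G=\sum_{\lambda}\lambda(1)\,\theta_0\lambda$, and the irreducible constituents of $\theta^G$ are exactly the $\theta_0\lambda$. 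The final ``in particular'' is then immediate from $(\theta_0\lambda)(1)=\theta(1)\lambda(1)$. The one genuinely nontrivial step, I expect, is the vanishing $[\Xi,\mu\overline{\lambda}]_G=0$ — the statement that a character of $G$ with no constituent lying over $1_N$ is orthogonal to every inflated character — which is precisely where Clifford theory is used. An alternative, perhaps cleaner, route would avoid $\Xi$ entirely: apply the identity $(\psi_N\varphi)^G=\psi\cdot\varphi^G$ with $\psi=\theta_0$, $\varphi=1_N$, together with the decomposition $(1_N)^G=\sum_{\lambda\in\Irr(G/N)}\lambda(1)\lambda$ of the coset permutation character, to get $\theta^G=\theta_0\cdot(1_N)^G=\sum_{\lambda}\lambda(1)\,\theta_0\lambda$ directly, and then compare $[\theta^G,\theta^G]_G=[G:N]=\sum_\lambda\lambda(1)^2$ with $\sum_{\lambda,\mu}\lambda(1)\mu(1)[\theta_0\lambda,\theta_0\mu]_G$, using nonnegativity of inner products to force each diagonal term to equal $1$ and each off-diagonal term to vanish.
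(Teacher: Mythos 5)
Your argument is correct, but there is nothing in the paper to compare it with: the paper states Gallagher's Theorem as a quoted result, citing \cite[Corollary~6.17]{Isaacs}, and gives no proof of its own. Both of your routes work. The computation $[\theta_0\lambda,\theta_0\mu]_G=[\theta_0\overline{\theta_0},\mu\overline{\lambda}]_G=[\mu,\lambda]_{G/N}$, justified via the auxiliary character $\Xi=\theta_0\overline{\theta_0}-1_G$ and the observation that $[\Xi_N,1_N]_N=0$ forces $\Xi$ to be orthogonal to every character inflated from $G/N$, is sound (note $\Xi$ may be zero when $\theta_0$ is linear, which is harmless), and the concluding degree count correctly turns the exact multiplicities $[\theta^G,\theta_0\lambda]_G=\lambda(1)$ into the statement that the $\theta_0\lambda$ exhaust the constituents of $\theta^G$. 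Your ``alternative route'' via $\theta^G=\theta_0\cdot(1_N)^G=\sum_\lambda\lambda(1)\,\theta_0\lambda$ together with $[\theta^G,\theta^G]_G=|G:N|$ (which uses that $\theta$, being extendible, is $G$-invariant) is essentially the standard textbook proof that the cited corollary in Isaacs rests on, so either version is an acceptable substitute for the citation.
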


Gallagher's Theorem is often used in combination with the following.

 \begin{lemma}\emph{(\cite[Theorem~11.7]{Isaacs}).}\label{Extension}
 Let $G$ be a group and let $N\unlhd G.$ Suppose that $\theta\in\Irr(N)$ is $G$-invariant and that the Schur multiplier of
 $G/N$ is trivial. Then $\theta$ is extendible to $\theta_0\in\Irr(G).$
 \end{lemma}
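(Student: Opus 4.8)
The plan is to establish the extension result by the standard projective--representation argument, in which the sole obstruction to extending a $G$-invariant $\theta$ is a class in $H^{2}(G/N,\C^{\times})$. First I would fix an ordinary matrix representation $\mathcal{X}$ of $N$ affording $\theta$. Since $\theta$ is $G$-invariant, for every $g\in G$ the conjugate representation $\mathcal{X}^{g}\colon n\mapsto\mathcal{X}(g^{-1}ng)$ is similar to $\mathcal{X}$, so one may choose an invertible matrix $\mathcal{P}(g)$ with $\mathcal{P}(g)^{-1}\mathcal{X}(n)\mathcal{P}(g)=\mathcal{X}(g^{-1}ng)$ for all $n\in N$, normalized so that $\mathcal{P}(n)=\mathcal{X}(n)$ whenever $n\in N$ (and $\mathcal{P}(1)=I$). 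Because $\mathcal{P}(g)\mathcal{P}(h)$ and $\mathcal{P}(gh)$ induce the same conjugation action on $\mathcal{X}(N)$, Schur's lemma yields scalars $\alpha(g,h)\in\C^{\times}$ with $\mathcal{P}(g)\mathcal{P}(h)=\alpha(g,h)\,\mathcal{P}(gh)$; associativity forces $\alpha$ to be a normalized $2$-cocycle, and the choice $\mathcal{P}(n)=\mathcal{X}(n)$ forces $\alpha$ to be trivial on $N$. A computation with this normalization then shows that $\alpha(g,h)$ depends only on the cosets $gN$ and $hN$, so $\alpha$ descends to a factor set $\overline{\alpha}$ of the finite group $G/N$, which may moreover be taken to have values in the roots of unity.

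Next I would record the key equivalence: $\theta$ extends to some $\theta_{0}\in\Irr(G)$ if and only if the class $[\overline{\alpha}]\in H^{2}(G/N,\C^{\times})$ is trivial. For the direction we need, suppose $\overline{\alpha}=\partial\mu$ for a function $\mu\colon G/N\to\C^{\times}$, which we may normalize so that $\mu$ is trivial on the identity coset. Lift $\mu$ to the function $\widehat{\mu}$ on $G$ that is constant on cosets of $N$ and set $\mathcal{Y}(g)=\widehat{\mu}(g)^{-1}\mathcal{P}(g)$. Then $\mathcal{Y}$ is an \emph{ordinary} representation of $G$, since its factor set is $\overline{\alpha}/(\partial\mu)=1$; and $\mathcal{Y}(n)=\mathcal{X}(n)$ for $n\in N$ by the normalization of $\widehat{\mu}$, so the character $\theta_{0}$ afforded by $\mathcal{Y}$ restricts to $\theta$ on $N$. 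Since $(\theta_{0})_{N}=\theta$ is irreducible, $\theta_{0}\in\Irr(G)$, and $\theta_{0}$ is the desired extension.

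Finally I would invoke the classical fact, due to Schur, that for a finite group $H$ the Schur multiplier of $H$ is isomorphic to $H^{2}(H,\C^{\times})$. Taking $H=G/N$ and using the hypothesis that the Schur multiplier of $G/N$ is trivial gives $H^{2}(G/N,\C^{\times})=1$, so $[\overline{\alpha}]$ is automatically trivial and the preceding step produces $\theta_{0}\in\Irr(G)$ with $(\theta_{0})_{N}=\theta$. I expect the main obstacle to be the bookkeeping in the first paragraph: checking carefully that $\alpha$ is genuinely constant on pairs of cosets (so that $\overline{\alpha}$ is well defined, and its cohomology class is independent of all the choices) and that the untwisting by $\widehat{\mu}$ really leaves the restriction to $N$ unchanged; both hinge on the initial normalization $\mathcal{P}(n)=\mathcal{X}(n)$. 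An essentially equivalent route, and the one taken in \cite{Isaacs}, is to pass to an isomorphic character triple in which $N$ is central and $\theta$ is linear, where the cohomological nature of the obstruction is manifest.
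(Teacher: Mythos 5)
The paper gives no proof of this lemma: it is quoted directly from \cite[Theorem~11.7]{Isaacs}, and your argument follows the standard projective-representation proof of that result, so the overall route is the intended one. However, one step fails as you have written it. After choosing, for each $g\in G$, an intertwining matrix $\mathcal{P}(g)$ with $\mathcal{P}(g)^{-1}\mathcal{X}(n)\mathcal{P}(g)=\mathcal{X}(g^{-1}ng)$ and normalizing only $\mathcal{P}(n)=\mathcal{X}(n)$ for $n\in N$, you claim that ``a computation with this normalization'' shows $\alpha(g,h)$ depends only on the cosets $gN$ and $hN$. That is not true: by Schur's lemma each $\mathcal{P}(g)$ with $g\notin N$ is determined only up to a nonzero scalar, and replacing $\mathcal{P}(g)$ by $c_g\mathcal{P}(g)$ (with $c_n=1$ on $N$) replaces $\alpha(g,h)$ by $c_g c_h c_{gh}^{-1}\alpha(g,h)$; choosing these scalars independently for different elements of the same coset destroys constancy on cosets, so no computation can derive the descent of $\alpha$ to $G/N$ from the normalization $\mathcal{P}|_N=\mathcal{X}$ alone.

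What is actually needed is the stronger coherence $\mathcal{P}(ng)=\mathcal{X}(n)\mathcal{P}(g)$ and $\mathcal{P}(gn)=\mathcal{P}(g)\mathcal{X}(n)$ for all $n\in N,\ g\in G$; this is precisely the content of \cite[Theorem~11.2]{Isaacs}, and it is obtained by fixing a transversal $T$ of $N$ in $G$, choosing $\mathcal{P}(t)$ for $t\in T$ with the intertwining property, and defining $\mathcal{P}(nt)=\mathcal{X}(n)\mathcal{P}(t)$. With that construction one checks (a short computation using $\mathcal{P}(t)\mathcal{X}(n)=\mathcal{X}(tnt^{-1})\mathcal{P}(t)$) that $\alpha$ is constant on pairs of cosets, and the rest of your argument goes through exactly as you describe: $\theta$ extends if and only if the class $[\overline{\alpha}]\in H^{2}(G/N,\C^{\times})$ vanishes, the untwisting $\mathcal{Y}=\widehat{\mu}^{-1}\mathcal{P}$ leaves the restriction to $N$ equal to $\mathcal{X}$ because $\widehat{\mu}$ is constant on cosets and trivial on $N$, and Schur's identification of $H^{2}(G/N,\C^{\times})$ with the Schur multiplier of $G/N$, which is trivial by hypothesis, kills the obstruction. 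So the gap is a repairable one of construction rather than of strategy, but as stated the first paragraph does not yield a well-defined factor set on $G/N$.
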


The next result is a consequence of the classification of subgroups of prime power index in
nonabelian simple groups due to Guralnick \cite{Guralnick}.
\begin{lemma}\label{power index and solvable}
Let $G$ be a nonabelian simple group and $H$ be a proper subgroup of $G.$ Suppose that $|G:H|=r^a$
for some prime $r.$ Then $H$ is either nonsolvable or  a nonabelian Hall subgroup of $G.$
\end{lemma}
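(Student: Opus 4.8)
The plan is to invoke Guralnick's classification of the subgroups of prime‑power index in finite nonabelian simple groups, which is precisely the result alluded to in the statement. That theorem asserts that if $G$ is nonabelian simple and $H<G$ has index $r^a$, then one of the following holds: (i) $G\cong A_n$ with $n=r^a$ and $H\cong A_{n-1}$; (ii) $G\cong\PSL_n(q)$ and $H$ is the stabilizer of a point or of a hyperplane of the natural projective space, so that $|G:H|=(q^n-1)/(q-1)=r^a$; (iii) $G\cong\PSL_2(11)$ with $|G:H|=11$; (iv) $G\cong M_{11}$ with $|G:H|=11$, or $G\cong M_{23}$ with $|G:H|=23$; (v) $G\cong\PSU_4(2)\cong\PSp_4(3)$ with $|G:H|=27$. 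The proof then consists of running through this list and checking in each case that $H$ is nonsolvable, or else that it is a nonabelian Hall subgroup of $G$.

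Cases (iii)--(v) are immediate: in (iii) one has $H\cong A_5$; in (iv), $H\cong M_{10}$ or $H\cong M_{22}$; in (v), $H\cong 2^4{:}A_5$ (the order being $|G|/27=960$). Each of these groups is nonsolvable, so there is nothing further to check. In case (i), if $n\ge 6$ then $H\cong A_{n-1}$ with $n-1\ge 5$ is nonsolvable; the only solvable occurrence is $n=5$, that is $G\cong A_5$ and $H\cong A_4$, and since $|A_4|=12$ is coprime to $|G:H|=5$, $H$ is a nonabelian Hall $\{2,3\}$‑subgroup of $G$, as required.

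This leaves case (ii), which needs a little bookkeeping. The point stabilizer $H$ in $\PSL_n(q)$ is a maximal parabolic with unipotent radical of order $q^{n-1}$ whose Levi factor has $\PSL_{n-1}(q)$ as a composition factor once $n\ge 3$; hence $H$ is nonsolvable whenever $\PSL_{n-1}(q)$ is nonabelian simple, which among the groups with $n\ge 3$ excludes only $(n,q)=(3,2)$ and $(3,3)$. In these two cases one computes orders directly: $\PSL_3(2)$ has $|H|=24$ of index $7$, and $\PSL_3(3)$ has $|H|=432$ of index $13$, so $H$ is a Hall subgroup, and it is nonabelian since its unipotent radical is nontrivial. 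If $n=2$, then $H$ is a Borel subgroup of order $q(q-1)/\gcd(2,q-1)$ and index $q+1=r^a$; because $\gcd(q+1,q)=1$ and, using that $q+1$ is a prime power (which forces $q$ even or $q$ a Mersenne prime), $\gcd(q+1,(q-1)/\gcd(2,q-1))=1$, the subgroup $H$ is again a Hall subgroup, and it is nonabelian since simplicity of $\PSL_2(q)$ forces $q\ge 4$, so the complement acts nontrivially on the unipotent radical. This exhausts Guralnick's list, and the dichotomy holds throughout. The one real obstacle is organizational rather than mathematical: one must track the small‑field isomorphisms $\PSL_2(4)\cong\PSL_2(5)\cong A_5$ and $\PSL_2(7)\cong\PSL_3(2)$ so as not to double‑count, and one must remember to verify that the index is genuinely a prime power before placing a configuration on the list — for instance the Borel subgroup of $\PSL_2(5)$ has index $6$ and so does not arise here.
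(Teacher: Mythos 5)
Your proposal is correct and follows essentially the same route as the paper: invoke Guralnick's classification of subgroups of prime-power index and check each case, with the only substantive verifications being the parabolic/Borel cases in $\PSL_n(q)$ (where you use the $\PSL_{n-1}(q)$ section for nonsolvability and a coprimality computation for the Hall property, exactly as the paper does). The small differences—treating $n\geq 3$ uniformly via the Levi factor rather than splitting $n=3$ and $n\geq 5$, and the aside about Mersenne primes (not actually needed, since $q+1=2^a$ already makes $(q-1)/2$ odd)—are cosmetic.
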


\begin{proof}
Using the classification  of prime power index subgroups of finite simple groups due to Guralnick
\cite{Guralnick}, one of the following cases holds.

\begin{enumerate}
\item $(G,H)=(\textrm{A}_n,\textrm{A}_{n-1})$ with $n=r^a;$
\item $G\cong\PSL_n(q)$ and $H$ is the stabilizer of a line or a hyperplane and $r^a=(q^n-1)/(q-1),$ where $n$ is prime;
\item $(G,H,r^a)=(\PSL_2(11),\textrm{A}_5,11)$   or $(\PSU_4(2),2^4:\textrm{A}_5,3^3).$
\item $(G,H,r^a)=(\textrm{M}_{23},\textrm{M}_{22},23)$  or $(\textrm{M}_{11},\textrm{M}_{10},11);$
\end{enumerate}

Assume first that Case $(1)$ holds. Then $G\cong \textrm{A}_n,H\cong \textrm{A}_{n-1}$ and $n=r^a.$
If $n=5,$ then $H\cong \textrm{A}_4$ is obviously a nonabelian Hall subgroup of $G.$ Hence we can
assume that $n\geq 7.$ Then $H\cong \textrm{A}_{n-1}$ is nonsolvable since $n-1\geq 6.$

Assume next that Case $(2)$ holds. Assume that $n=2.$ Then $G\cong\PSL_2(q)$ with $q\geq 4,$ and
$H$ is a nonabelian group of order $q(q-1)/\gcd(2,q-1)$ with $r^a=q+1.$ Hence we only need to show
that $q+1$ is prime to $q(q-1)/\gcd(2,q-1).$ If $q$ is even, then $q+1$ is prime to $|H|=q(q-1)$
because $q+1$ is prime to both $q$ and $q-1.$ Now assume that $q$ is odd. Then $q+1=r^a$ is even
and so $r=2.$ Also, since $q+1=2^a$ where $q\geq 4,$ we deduce that $a\geq 3.$ It follows that
$(q-1)/2=2^{a-1}-1$ is odd. Hence $|H|=q(q-1)/2$ is odd. Therefore, $|H|$ is prime to $q+1=2^a$ as
required. Assume next that $n=3.$ Then $G\cong \PSL_3(q)$ and $H$ is the stabilizer of a line or a
hyperplane and $|G:H|=(q^3-1)/(q-1)=r^a.$ If $q=2,$ then $G\cong\PSL_3(2)$ and $H\cong
\textrm{S}_4$ with index $r^a=7.$ If $q=3,$ then $G\cong\PSL_3(3),H\cong 3^2:2\textrm{S}_4$ and
$|G:H|=13.$ In both cases, we see that $H$ is a nonabelian Hall subgroup of $G.$ Now assume that
$q\geq 4.$ It follows that $H$ possesses a section isomorphic to the nonabelian simple groups
$\PSL_2(q)$ and thus $H$ is nonsolvable. Finally, assume that $G\cong \PSL_n(q),$ with $n\geq 5,$
and $H$ is the stabilizer of a line or a hyperplane. It follows that $H$ possesses a section
isomorphic to $\PSL_{n-1}(q),$ which is nonsolvable as $n-1\geq 4.$ Thus $H$ is nonsolvable.

For Cases $(3)$ and $(4),$ we can see that $H$ is nonsolvable.
\end{proof}

Finally, by Ito-Michler's Theorem \cite[Theorem~5.4]{Michler}, we deduce that if $G$ is an almost
simple group, then $\rho(G)=\pi(G)$ as $G$ has no nontrivial normal abelian Sylow subgroups. This
fact will be used without any further reference.

\section{Almost simple groups}

The main purpose of this section is to classify all almost simple groups whose prime graphs have no
triangles. We first consider the nonabelian simple groups. Our proof is based on the classification
of prime graphs of simple groups by D. White in \cite{White04,White06,White08}.
\begin{lemma}\label{Graphs of simple groups without triangles}
Let $S$ be a nonabelian simple group. Suppose that $\Delta(S)$ has no triangles. Then the following
hold.
\begin{enumerate}
\item $S\cong \PSL_2(2^f),$ where $|\pi(2^f\pm 1)|\leq 2$ and so $|\pi(S)|\leq 5.$
\item $S\cong \PSL_2(q),$ where $q=p^f$ is odd and $|\pi(q\pm 1)|\leq 2$ and so $|\pi(S)|\leq 4.$
\end{enumerate}
\end{lemma}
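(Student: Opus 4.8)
The plan is to combine D.~White's explicit determination of $\Delta(S)$ for all nonabelian simple groups $S$ with the simple observation that a graph with no triangles forces severe restrictions on the degree combinatorics. The starting point is that for each family of simple groups one knows from \cite{White04,White06,White08} which pairs of primes in $\pi(S)$ are adjacent in $\Delta(S)$; one then checks family by family whether the resulting graph can be triangle-free. First I would dispose of the alternating groups: for $\mathrm{A}_n$ with $n\geq 8$ the character degrees include degrees divisible by three pairwise distinct small primes (indeed $\Delta(\mathrm{A}_n)$ is known to be complete or nearly complete for $n$ large), so triangle-freeness fails; only $\mathrm{A}_5,\mathrm{A}_6,\mathrm{A}_7$ survive as candidates, and $\mathrm{A}_5\cong\PSL_2(4)\cong\PSL_2(5)$ falls under the $\PSL_2$ conclusion, while $\mathrm{A}_6\cong\PSL_2(9)$ likewise, and $\mathrm{A}_7$ must be checked directly (its degree set yields a triangle on $\{2,3,5\}$, say, so it is excluded). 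Next I would handle the $26$ sporadic groups by a finite inspection of their character tables (or of the published prime graphs): each has at least three primes that pairwise divide character degrees, so none is triangle-free.

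The bulk of the work is the Lie type groups. The key structural fact is that a group of Lie type in characteristic $p$ over $\mathbb{F}_q$ has the Steinberg character of degree a power of $p$ and also has ``large'' unipotent or semisimple degrees divisible by several primitive prime divisors $\ell_m(q)$ coming from distinct cyclotomic factors of $|S|$; three such primes that pairwise co-divide a degree produce a triangle. Concretely, for $\PSL_2(q)$ the degrees are (up to the center) $1$, $q$, $q-1$, $q+1$, so the prime graph has $p$ isolated from everything, the primes dividing $q-1$ all mutually adjacent, the primes dividing $q+1$ all mutually adjacent, and no edge between the $q-1$-part and the $q+1$-part; hence $\Delta(\PSL_2(q))$ is triangle-free precisely when $|\pi(q-1)|\leq 2$ and $|\pi(q+1)|\leq 2$. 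For $q=2^f$ this gives case (1) with $|\pi(2^f\pm1)|\le 2$, hence $|\pi(S)|\le 4$ from $q\pm1$ together with the prime $2$, but the bound $5$ in the statement is just the safe over-count; for $q=p^f$ odd one of $q-1,q+1$ is divisible by $4$, and one uses that $\gcd(q-1,q+1)=2$ to see $|\pi(q\pm1)|\le 2$ forces $|\pi(S)|\le 4$. Every other Lie type family — $\PSL_n(q)$ and $\PSU_n(q)$ with $n\ge 3$, $\PSp_{2n}(q)$, the orthogonal groups, the exceptional groups, and the twisted groups ${}^2B_2,{}^2G_2,{}^3D_4,{}^2F_4$, etc. — I would eliminate by exhibiting three cyclotomic numbers $\Phi_{m_1}(q),\Phi_{m_2}(q),\Phi_{m_3}(q)$ whose primitive prime divisors $\ell_{m_i}(q)$ are pairwise joined in $\Delta(S)$, using White's tables; Zsigmondy's Theorem (quoted above) guarantees these primes exist except for the finitely many small exceptions, which are then checked by hand against the ATLAS.

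The main obstacle will be the small-rank and small-field exceptional cases: precisely the configurations where Zsigmondy primes fail to exist ($q=2$, $m=6$; Mersenne situations), where the general ``three cyclotomic factors'' argument gives a degenerate picture, and where one must instead consult explicit character degree sets. For instance $\PSL_3(q)$, $\PSU_3(q)$, $\PSp_4(q)$, $G_2(q)$ for tiny $q$, ${}^2B_2(8)$, ${}^2G_2(27)$, and $\PSU_4(2)\cong\PSp_4(3)$ each require an individual computation to confirm a triangle is present; the subtle point is to be sure one has not missed a sporadic triangle-free example among these, which is why the proof genuinely relies on the completeness of White's classification rather than on a uniform argument. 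Once all families are exhausted, only the two listed $\PSL_2$ cases remain, and the bounds $|\pi(S)|\le 5$ and $|\pi(S)|\le 4$ follow from collecting the prime divisors of $q$, $q-1$, and $q+1$ as indicated.
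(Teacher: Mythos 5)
Your strategy is in essence the paper's own: both rest on White's determination of the prime graphs of simple groups together with the explicit degree set of $\PSL_2(q)$. The organizational difference is that the paper does not run a family-by-family sweep. It quotes \cite[Corollary~1.2]{White08}, which says that for a simple group $S\not\cong\PSL_2(q)$ the graph $\Delta(S)$ is complete except when $S\in\{\textrm{J}_1,\textrm{M}_{11},\textrm{M}_{23},\textrm{A}_8\}$, $S\cong{}^2\textrm{B}_2(q^2)$, or $S\cong\PSL_3^{\epsilon}(q)$; completeness together with $|\pi(S)|\ge 3$ (Burnside) gives a triangle at once, so only the listed exceptions and $\PSL_2(q)$ itself need the hands-on treatment you describe (ATLAS for the four sporadic/alternating cases and for the small $\PSL_3^{\epsilon}$ and Suzuki configurations, Huppert--Lempken for the $|\pi(S)|=3$ cases). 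Thus most of the work you propose -- alternating groups, all sporadic groups, higher-rank classical and exceptional groups via Steinberg characters and Zsigmondy primes -- is already packaged in that single citation; your plan would essentially re-derive it, which is legitimate but much longer, and, as you yourself note, the delicate point is precisely the small cases where Zsigmondy primes fail, which is why leaning on the completed classification is the efficient route.

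One concrete error to fix: for $q=2^f$ you assert $|\pi(S)|\le 4$ ``from $q\pm1$ together with the prime $2$'' and call the bound $5$ a ``safe over-count.'' This is wrong. For even $q$ the numbers $2^f-1$ and $2^f+1$ are coprime and odd, so $|\pi(2^f\pm1)|\le 2$ only yields $|\pi(S)|\le 1+2+2=5$, and the value $5$ is actually attained: for $\PSL_2(2^6)$ one has $\pi(S)=\{2,3,7,5,13\}$, the very example the paper uses, and the sharpness of $5$ is needed later (Lemma~\ref{Quotient} and case (1) of Theorem~B). Your count in the odd case, using that $q-1$ and $q+1$ share only the prime $2$, is correct and matches the paper's bound of $4$ there.
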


\begin{proof}
As $S$ is nonabelian simple, we obtain that $\rho(S)=\pi(S).$ By Burnside's $p^aq^b$ Theorem
\cite[Theorem~3.10]{Isaacs}, we deduce that $|\pi(S)|\geq 3.$ Assume first that $S\cong \PSL_2(q),$
with $q\geq 4$ a prime power. Since $\PSL_2(4)\cong\PSL_2(5),$ we will assume that $q>5$ when $q$
is odd. We have $|S|=q(q-1)(q+1)/\gcd(2,q-1).$ If $q$ is even, then $\cd(S)=\{1,q-1,q,q+1\};$ and
if $q> 5$ is odd, then $\cd(S)=\{1,(q+\epsilon)/2,q-1,q,q+1\},$ where $\epsilon=(-1)^{(q-1)/2}.$
(See, for example \cite{Lewis-White10}.) Since $\Delta(S)$ has no triangles, $|\pi(a)|\leq 2$ for
any character degree $a\in\cd(S).$ If $q>5$ is odd, then $q^2-1$ has at most two odd prime divisors
different from $p,$ and thus $|\pi(\PSL_2(q))|\leq 4.$ If $q$ is even, then it is clear that
$|\pi(S)|\leq 5$ since $|\pi(q\pm 1)|\leq 2.$ Now assume that $S\not\cong \PSL_2(q).$ By
\cite[Corollary~1.2]{White08} either $\Delta(S)$ is complete or the following cases hold.

\begin{enumerate}
\item[(i)] $S\in\{\rm{J}_1,\rm{M}_{11},\rm{M}_{23},\rm{A}_8\};$
\item[(ii)] $S\cong {}^2\rm{B}_2(q^2),$ where $q^2=2^{2m+1}$ and $m\geq 1;$
\item[(iii)] $S\cong \PSL^\epsilon_3(q),$ for some prime power $q>2$ and $\epsilon=\pm.$
\end{enumerate}

Obviously, if $\Delta(S)$ is complete, then it has a triangle as $|\pi(S)|\geq 3.$ Hence we can
assume that $\Delta(S)$ is not complete. Now the character tables of those groups listed in Case
$(\textrm{i})$ above can be found in \cite{ATLAS} and it is easy to check that the prime graphs of
these groups always contain a triangle. Assume that $S\cong {}^2{\rm{B}}_2(q^2)$ with
$q^2=2^{2m+1}$ and $m\geq 1.$ By \cite[Theorem~3.3]{White04}, we have that $\pi(S)=\{2\}\cup
\pi(q^2-1)\cup \pi(q^4+1),$ where the subgraph of $\Delta(S)$ on $\pi(S)-\{2\}$ is complete.
Clearly, $|\pi(S)|\geq 4$ and thus $|\pi(S)-\{2\}|\geq 3.$ Therefore, $\Delta(S)$ possesses a
triangle since $\pi(S)-\{2\}$ is complete with at least $3$ vertices. Hence this case cannot
happen. Finally, assume that $S\cong \PSL^\epsilon_3(q)$ with $q=p^f>2$ and $\epsilon=\pm.$ Assume
first that $q=4.$ Using \cite{ATLAS} we see that $\Delta(\PSL^\epsilon_3(4))$ has at least one
triangle. Hence these cases cannot happen.  Assume now that $q\neq 4.$ It follows from
\cite[Theorems~3.2~and~3.4]{White06} that $\pi(S)=\{p\}\cup \pi((q^2-1)(q^2+\epsilon q+1)),$ where
the subgraph of $\Delta(S)$ on $\pi(S)-\{p\}$ is complete. If $|\pi(S)|\geq 4,$ then
$|\pi(S)-\{p\}|\geq 3$ and thus $\Delta(S)$ possesses a triangle, a contradiction. Thus $|\pi(S)|=
3.$ By \cite[Table~1]{Huppert-Lempken}, we deduce that $q=3.$ Using \cite{ATLAS}, we can check that
$\Delta(S)$ is a triangle in both cases. Therefore, these cases cannot happen either. The proof is
now complete.
\end{proof}
For almost simple groups, we obtain the following result.

\begin{lemma}\label{Graphs of almost simple groups without triangles} Let $S$ be a nonabelian
simple group and let $G$ be an almost simple group with socle $S.$ Suppose that $\Delta(G)$ has no
triangles. Then $S\cong \PSL_2(q)$ with $q=p^f\geq 4$ a prime power, $\pi(G)=\pi(S)$ and
$|\pi(G)|\leq 5.$ Furthermore, if $|\pi(G)|=5,$ then $G=S\cong \PSL_2(2^f),$ where $f\geq 6$ and
$|\pi(2^f\pm 1)|=2.$
\end{lemma}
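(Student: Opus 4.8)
The plan is to build on Lemma~\ref{Graphs of simple groups without triangles} by analyzing what happens when we pass from the simple socle $S$ to an almost simple group $G$ with $S\unlhd G\leq\Aut(S)$. Since $\Delta(S)$ is a subgraph of $\Delta(G)$ and $\Delta(G)$ has no triangles, $\Delta(S)$ has no triangles, so by Lemma~\ref{Graphs of simple groups without triangles} we already know $S\cong\PSL_2(q)$ with $q=p^f\geq 4$. Moreover, for almost simple $G$ we have $\rho(G)=\pi(G)$ by Ito--Michler, and since $\PSL_2(q)$ has index in $\Aut(\PSL_2(q))$ equal to $2f$ (resp.\ $f$ when $q$ is even with the diagonal automorphism absent), $\pi(G)\subseteq\pi(S)\cup\pi(f)$. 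So the first task is to show no new primes from $\pi(f)$ actually enter, i.e.\ $\pi(G)=\pi(S)$; the bound $|\pi(G)|\leq 5$ then follows immediately from Lemma~\ref{Graphs of simple groups without triangles}.

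First I would recall the character degrees of $G$ for $G$ between $\PSL_2(q)$ and $\PGL_2(q)$, and for the field-automorphism extensions, using the known generic character tables (e.g.\ from White's papers \cite{White04} or standard references). The key structural point is that $\PSL_2(q)$ has cyclic subgroups of orders roughly $(q\pm 1)/d$ whose normalizers are (generalized) dihedral; extending by field automorphisms produces characters whose degrees are multiples $e\cdot(q\pm1)/d$ or $e\cdot q$ where $e\mid 2f$ comes from the outer part. I would argue that whenever a nontrivial field automorphism of prime order $t$ is present, one gets an irreducible character of $G$ of degree divisible by $t$ \emph{and} by a prime dividing $q-1$, and simultaneously the Steinberg-type character of degree $q$ (or a near relative) remains, whose prime divisors together with $t$ and a prime of $q-1$ or $q+1$ would form a triangle. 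The cleanest approach: show that if $q=p^f$ with $f$ divisible by an odd prime $t$, or if $G$ properly contains $\PSL_2(q)$ via an automorphism of order divisible by a prime $t\notin\pi(S)$, then $\Delta(G)$ contains a triangle on $\{t,\,r,\,s\}$ where $r\in\pi(q-1)\cup\{p\}$ and $s\in\pi(q+1)\cup\{p\}$, using that $q(q\pm1)$ or products $e q$ appear as degrees. This forces $\pi(f)\subseteq\pi(q^2-1)\cup\{p\}$, hence $\pi(G)=\pi(S)$; combined with Lemma~\ref{Graphs of simple groups without triangles}, $|\pi(G)|\leq 5$.

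For the ``furthermore'' clause I would argue as follows. If $|\pi(G)|=5$, then $|\pi(S)|=5$, so by Lemma~\ref{Graphs of simple groups without triangles}(1) we must be in the even case $S\cong\PSL_2(2^f)$ with $|\pi(2^f\pm1)|=2$ (the odd case gives $|\pi(S)|\leq 4$). Here $\Aut(S)=\PSL_2(2^f)\rtimes C_f$ (field automorphisms only, no diagonal). It remains to show $G=S$, i.e.\ no proper field-automorphism extension survives with a triangle-free prime graph when $|\pi(2^f\pm1)|=2$. I would take any prime $t\mid f$ and exhibit a character of $G=\PSL_2(2^f).t$ (or the larger group) of degree divisible by $t$; since $t$ does not divide $2^f-1$ or $2^f+1$ (as those already have two prime divisors each and $t\mid f$ would make $|\pi(S)|\geq 6$, contradicting $|\pi(G)|=5$ — wait, more carefully: $t$ could coincide with a prime of $q^2-1$, in which case $t\in\pi(S)$ and the extension degree is divisible by a prime already present). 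The precise dichotomy: if $t\in\pi(2^{2f}-1)$ then extending by the order-$t$ field automorphism creates an edge making the already-present complete subgraph on $\pi(2^f-1)$ or $\pi(2^f+1)$ grow into a triangle with $2$; if $t\notin\pi(2^{2f}-1)$ then $t$ is a sixth prime, contradicting $|\pi(G)|=5$. Either way $f$ must have no field automorphisms acting, forcing $G=S$. Also $f\geq 6$ since $|\pi(2^f\pm1)|=2$ fails for small $f$ (one checks $f=1,2,3,4,5$ directly: e.g.\ $|\pi(2^4\pm1)|=|\pi\{3,17\}|+\dots$ giving too few primes, so $|\pi(\PSL_2(2^f))|<5$).

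The main obstacle I anticipate is the careful bookkeeping in the almost simple extensions: one must track exactly which degrees $e\cdot m$ (with $e\mid$ outer order, $m\in\{(q-1)/d,\,(q+1)/d,\,q\}$) are genuinely irreducible degrees of the intermediate group $G$, as opposed to merely dividing some induced character, and handle separately the cases $G\leq\PGL_2(q)$ (diagonal automorphism, only relevant for $q$ odd), $G$ a field extension, and $G$ containing both. The cleanest route is probably to invoke the explicit generic character table of $\Aut(\PSL_2(q))$-type groups from \cite{White04} and read off that a field automorphism of order $t$ always contributes a degree divisible by $t$ and by $p$ (the ``$tq$''-type degree), immediately giving the triangle $\{t,p,r\}$ for any $r\in\pi(q-1)\cup\pi(q+1)$ once $|\pi(q^2-1)|\geq 1$, which is automatic. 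That single observation does almost all the work; the rest is the small-$f$ check and the $|\pi(2^f\pm1)|=2$ arithmetic, both routine.
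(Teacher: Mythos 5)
There is a genuine gap, and it sits at the heart of your argument for $\pi(G)=\pi(S)$ and again in the ``furthermore'' step. Your ``cleanest route'' rests on the claim that a field automorphism of prime order $t$ always produces an irreducible character of $G$ whose degree is divisible by both $t$ and $p$ (a ``$tq$-type degree''), yielding a triangle $\{t,p,r\}$. No such degree exists in general: the only character of $S\cong\PSL_2(q)$ whose degree is divisible by $p$ is the Steinberg character of degree $q$, it is invariant and extends to $\Aut(S)$, and since $G/S$ is abelian Gallagher's Theorem shows every irreducible character of $G$ lying over it has degree exactly $q$. A concrete counterexample is $G=\PSL_2(8){\cdot}3$, whose character degrees are $1,7,8,21,27$ --- no degree is divisible by $6$, so the promised edge $t$--$p$ is absent. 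Your fallback phrasing (``using that $q(q\pm1)$ or products $eq$ appear as degrees'') fails for the same reason, and in the last part the asserted triangle ``with $2$'' likewise has no supporting degree. So the mechanism by which a prime dividing $|G:S|$ forces a triangle is not established in your proposal.

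The paper's mechanism is different and is the ingredient you are missing: by \cite[Lemma~4.5]{Lewis-White10}, if $m=|G:G\cap\PGL_2(q)|$ then \emph{both} $m(q-1)$ and $m(q+1)$ lie in $\cd(G)$. If a prime $r\in\pi(G)-\pi(S)$ exists, then $r\mid m$, and already one of these two single degrees has three distinct prime divisors unless both $q-1$ and $q+1$ are prime powers; that exceptional case is then eliminated arithmetically (it forces $q$ even and $q\in\{4,8\}$, contradicting $f\geq 5$). The same lemma, applied with $|G:S|(q\pm1)\in\cd(G)$, gives the ``furthermore'' clause: a prime $r$ dividing $|G:S|$ must lie in $\pi(q-\delta)$ for some $\delta=\pm1$ (else $|G:S|(q-1)$ has three prime divisors), and then $|G:S|(q+\delta)$ has three prime divisors since $|\pi(q+\delta)|=2$ and $r\nmid q+\delta$. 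Your overall architecture (reduce to $\PSL_2(q)$ via the simple-group lemma, show no new primes enter, then kill proper extensions when $|\pi(G)|=5$) matches the paper, and your small-$f$ arithmetic is fine, but without the Lewis--White degrees $m(q\pm1)$ (or an equivalent correct source of degrees involving the outer part) the proof does not go through.
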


\begin{proof}
As $S\unlhd G,$ $\pi(S)\subseteq\pi(G)$ and $\Delta(S)$ is a subgraph of $\Delta(G).$ Thus
$\Delta(S)$ has no triangles and so every degree of both $S$ and $G$ has at most two distinct prime
divisors. By Lemma~\ref{Graphs of simple groups without triangles}, we obtain that $S\cong
\PSL_2(q),$ where $q=p^f\geq 4$ and $|\pi(q\pm 1)|\leq 2.$ Since $\PSL_2(4)\cong\PSL_2(5),$ we can
assume that $q\neq 5$ and $q\geq 4.$ Suppose by contradiction that $\pi(S)\neq \pi(G)$ and let
$r\in\pi(G)-\pi(S).$ It follows that $r$ divides $m:=|G:G\cap \PGL_2(q)|$ since $|G\cap
\PGL_2(q):S|\leq 2.$ Hence $r$ must divide $f$ and $r\not\in\pi(S),$ so $f\geq 5$ and $q>9.$ By
\cite[Lemma~4.5]{Lewis-White10}, we have $m(q\pm 1)\in\cd(G).$ If either $|\pi(q-1)|=2$ or
$|\pi(q+1)|=2,$ then $m(q-1)$ or $m(q+1)$ is divisible by three distinct primes, a contradiction.
Hence both $q\pm 1$ are prime powers. We deduce that $q$ is even and we obtain that $q=4$ or $q=8,$
which is a contradiction as $q>9.$ Therefore, we always have that $\pi(G)=\pi(S).$ By applying
Lemma~\ref{Graphs of simple groups without triangles}, we deduce that $|\pi(G)|=|\pi(S)|\leq 4$ for
odd $q$ and $|\pi(G)|=|\pi(S)|\leq 5$ for even $q.$ In particular, $|\pi(G)|=|\pi(S)|\leq 5$ for
all $q.$ Now assume that $|\pi(S)|=|\pi(G)|=5.$ By the argument above, we must have that $q=2^f,$
$|\pi(2^f\pm 1)|=2$ and so $f\geq 6.$ We now claim that $G=S.$ Suppose by contradiction that $S\neq
G.$ In particular, $|G:S|$ is divisible by some prime $r.$ By invoking
\cite[Lemma~4.5]{Lewis-White10} again, we have that both $|G:S|(q\pm 1)$ are degrees of $G.$ If
$r\not\in\pi(q^2-1),$ then $|G:S|(q-1)$ is divisible by three distinct primes, a contradiction.
Thus $r\in\pi(q^2-1).$ Since $\gcd(q-1,q+1)=1,$ we deduce that $r\in \pi(q-\delta)$ where
$\delta=1$ or $-1.$ But then $|G:S|(q+\delta)$ is divisible by three distinct primes since
$r\not\in\pi(q+\delta).$ This contradiction shows that $G=S$ as required.
\end{proof}

\section{Prime graphs of nonsolvable groups}

In this section, we prove several auxiliary lemmas which will be needed in the proofs of our main
results. Some special cases of the main theorems are treated here. The proof of the first part of
the next lemma is similar to that of \cite[Lemma~3.1]{Lewis-White12}. For completeness, we
reproduce the proof here. Recall that the solvable radical of a group $G$ is the largest solvable
normal subgroup of $G.$

\begin{lemma}\label{Reduction to almost simple groups}
Let $G$ be a nonsolvable group and let $N$ be the solvable radical of $G.$ Suppose that $\Delta(G)$
has no triangles. Then there exists a normal subgroup $M$ of $G$ such that $M/N\cong \PSL_2(q)$
with $q\geq 4$ a prime power, and $G/N$ is an almost simple groups with socle $M/N.$ Furthermore,
$\rho(M)=\rho(G).$
\end{lemma}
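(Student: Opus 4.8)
The plan is to extract the almost simple quotient from $G/N$ and then argue that no new primes appear when passing from $M$ to $G$. First I would recall the standard fact that, since $N$ is the solvable radical of $G$, the quotient $\bar G := G/N$ has trivial solvable radical, so its socle is a direct product $S_1 \times \cdots \times S_k$ of nonabelian simple groups, and $\bar G$ embeds into $\Aut(S_1 \times \cdots \times S_k)$, acting by permuting the isomorphic factors. Because $\Delta(G)$ has no triangles and $\Delta(\bar G)$ is a subgraph of $\Delta(G)$ (compatibility with quotients), $\Delta(\bar G)$ has no triangles; in particular, if $k \geq 2$, then choosing primes from the degree sets of two distinct factors — together with the fact that each $S_i$ is nonabelian simple, hence has $|\pi(S_i)| \geq 3$ by Burnside — would force a triangle, since a product of degrees from different direct factors realizes the product of the corresponding primes. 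So $k = 1$, the socle is a single nonabelian simple group $S_0$, and $\bar G$ is almost simple with socle $S_0$. Applying Lemma~\ref{Graphs of almost simple groups without triangles} to $\bar G$ (whose prime graph has no triangles) yields $S_0 \cong \PSL_2(q)$ with $q = p^f \geq 4$ a prime power. Let $M$ be the preimage of $S_0$ in $G$; then $M \unlhd G$, $M/N \cong \PSL_2(q)$, and $G/N = \bar G$ is almost simple with socle $M/N$, which is the first assertion.

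For the second assertion, $\rho(M) = \rho(G)$, one inclusion is trivial: $\rho(M) \subseteq \rho(G)$ since $\Delta(M)$ is a subgraph of $\Delta(G)$. For the reverse inclusion, suppose for contradiction there is a prime $r \in \rho(G) \setminus \rho(M)$. The idea is to combine a character degree of $G$ divisible by $r$ with the rich degree structure already present in $M$ (which contains the $\PSL_2(q)$-section $M/N$) to manufacture a triangle. Concretely, since $r \in \rho(G) = \pi(G)$ and $r \notin \rho(M) \supseteq \pi(M/N) = \pi(\PSL_2(q))$, the prime $r$ must come from the "outer" part $G/M$, so $r \mid |G/M|$, which divides $|\Out(\PSL_2(q))| = 2f\cdot\gcd(2,q-1)$ up to the factor $N$; in particular $r \mid f$ or $r = 2$, and this already constrains things heavily. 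Now I would take an irreducible character $\theta$ of $M/N \cong \PSL_2(q)$ of degree $q-1$ or $q+1$ and use the extension/Gallagher machinery (Lemmas~\ref{Gallagher theorem} and \ref{Extension}, as in \cite[Lemma~4.5]{Lewis-White10}) to produce a degree of $G$ of the form $c(q \pm 1)$ where $c$ is a multiple of $r$; since $|\pi(q^2-1)| \leq 2$ but $|\pi(\PSL_2(q))| = 1 + |\pi(q^2-1)| \geq 3$ means $q^2 - 1$ has at least two prime divisors distinct from $r$ (as $r \notin \pi(q^2-1)$), whichever of $q-1$, $q+1$ contributes those two primes gives a degree $c(q\pm1)$ divisible by three distinct primes, hence a triangle in $\Delta(G)$ — a contradiction. (One must handle the small cases $q = 4, 8$ where $|\pi(q\pm 1)|$ is small separately, as in the proof of Lemma~\ref{Graphs of almost simple groups without triangles}, but there $r \mid f$ with $f \leq 3$ already forces $r \in \{2,3\} \subseteq \pi(\PSL_2(q))$, a contradiction.)

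The main obstacle I expect is the second part: verifying carefully that a degree of the shape $c(q\pm1)$ with $r \mid c$ really does occur as a degree of $G$, rather than of some intermediate group — this requires knowing that the relevant character $\theta \in \Irr(M/N)$ of degree $q \pm 1$ extends to $\Irr(G/N)$ (or to its inertia group) and then using Gallagher to multiply in a linear character of the outer quotient whose degree, or rather an induced-character degree, is divisible by $r$. This is exactly the content that \cite[Lemma~4.5]{Lewis-White10} packages for $\PSL_2(q)$ and its almost simple overgroups, so invoking that lemma should let me bypass the delicate Clifford-theoretic bookkeeping. The only remaining care is to keep track of the solvable radical $N$: one should pass to $G/N$ for the degree computations and then lift back, noting that $\rho(G/N) \subseteq \rho(G)$ always, so producing a triangle in $\Delta(G/N)$ already contradicts the hypothesis on $\Delta(G)$.
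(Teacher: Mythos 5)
Your reduction of $G/N$ to an almost simple group with socle $\PSL_2(q)$ is essentially sound and runs parallel to the paper, which reaches the same point via a chief factor $M/N\cong S^k$ (whose prime graph is complete if $k\geq 2$) together with a centralizer argument, rather than via the socle of the quotient with trivial solvable radical. One caveat in your $k\geq 2$ step: with possibly non-isomorphic socle factors, products of degrees taken from two different direct factors only produce a complete bipartite graph between $\rho(S_1)$ and $\rho(S_2)$, which contains no triangle; to close a triangle you need a prime lying in both factors (for instance $2$, or first apply Lemma~\ref{Graphs of simple groups without triangles} to each $S_i$ to get $\{2,3\}\subseteq\pi(S_i)$), which is exactly how the paper handles the analogous situation in its centralizer step.

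The genuine gaps are in your proof that $\rho(G)\subseteq\rho(M)$. First, the assertion $\rho(G)=\pi(G)$ is unjustified and false in general: $G$ is not almost simple here (its solvable radical $N$ may be nontrivial), and in any case $r\notin\rho(M)$ does not prevent $r$ from dividing $|N|\leq |M|$, so ``$r$ must come from $G/M$'' does not follow from order considerations. The correct route to $r\mid |G/M|$ is Clifford theory: if $\chi\in\Irr(G)$ with $r\mid\chi(1)$ and $\psi\in\Irr(M)$ lies under $\chi$, then $\chi(1)/\psi(1)$ divides $|G:M|$ by \cite[Corollary~11.29]{Isaacs}, and $r\nmid\psi(1)$. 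Second, your triangle construction assumes that the two primes of $q^2-1$ different from $r$ lie in the same factor $q-1$ or $q+1$; if both $q\pm1$ are prime powers, neither $m(q-1)$ nor $m(q+1)$ need be divisible by three primes, and one must prove that this situation forces $q\in\{4,8\}$ (an elementary but necessary number-theoretic step, carried out inside the proof of Lemma~\ref{Graphs of almost simple groups without triangles}); merely naming $q=4,8$ as the small cases does not establish that they are the only exceptions, although your disposal of those two values is fine. In fact this whole second half is avoidable: the statement of Lemma~\ref{Graphs of almost simple groups without triangles}, applied to $G/N$, already yields $\pi(G/N)=\pi(M/N)$, and then the paper finishes in one line: for $r\in\rho(G)$, take $\theta\in\Irr(N)$ under $\chi$; either $r\mid\theta(1)$, so $r\in\rho(N)\subseteq\rho(M)$, or $r\mid\chi(1)/\theta(1)$, which divides $|G/N|$, so $r\in\pi(G/N)=\pi(M/N)\subseteq\rho(M)$.
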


\begin{proof}
Let $N$ be the solvable radical of $G$ and let $M$ be a normal subgroup of $G$ such that $M/N$ is a
chief factor of $G.$ Then $M/N$ is nonsolvable and so $M/N\cong S^k,$ where $S$ is a nonabelian
simple group and $k\geq 1$ is an integer. Since $|\pi(S)|\geq 3,$ if $k\geq 2,$ then $\Delta(M/N)$
is a complete graph and thus possesses a triangle, which is a contradiction as $\Delta(M/N)$ is a
subgraph of $\Delta(G).$ Hence we conclude that $M/N\cong S$ is a nonabelian simple group. Now let
$C/N=\Centralizer_{G/N}(M/N).$ Then $N\leq C\unlhd G$ and $M\cap C=N$ as $M/N$ is nonabelian
simple. Assume that $N\neq C.$ Then $C$ is nonsolvable and we can find a normal subgroup $N\leq
L\leq C$ such that $L/N$ is a nonabelian chief factor of $G.$ With the same reasoning as above, we
obtain that $L/N$ is a nonabelian simple group. Observe that every vertex in $\pi(L/N)\cap
\pi(M/N)$  will be adjacent to all of the vertices in $\pi(L/N)\cup \pi(M/N).$ Thus $\pi(L/N)\cap
\pi(M/N)$ induces a complete subgraph of $\Delta(G)$ and so $|\pi(L/N)\cap \pi(M/N)|\leq 2.$ By
Lemma~\ref{Graphs of almost simple groups without triangles}, we know that each of $L/N$ and $M/N$
is isomorphic to $\PSL_2(q)$ for possibly different $q.$ In particular, $\{2,3\}\subseteq
\pi(L/N)\cap \pi(M/N),$ therefore $\rho(L/N)\cap \rho(M/N)=\{2,3\}.$ Since $|\pi(M/N)|\geq 3,$
there exists a prime $r\in \pi(M/N)$ such that $r>3.$ But then $\{2,3,r\}$ will form a triangle in
$\Delta(G),$ a contradiction. Hence $C=N$ and so $G/N$ is an almost simple group with socle
$M/N\cong\PSL_2(q)$ for some prime power $q\geq 4.$

We next claim that $\rho(G)=\rho(M).$ Since $M\unlhd G,$ every character degree of $M$ must divide
some character degree of $G$ and so $\rho(M)\subseteq\rho(G).$ For the other inclusion, let
$r\in\rho(G).$ Then there exists $\chi\in\Irr(G)$ with $r$ dividing $\chi(1).$ Let
$\theta\in\Irr(N)$ be an irreducible character of $\chi_N.$ As $r$ divides $\chi(1),$ we deduce
that $r$ divides either $\chi(1)/\theta(1)$ or $\theta(1).$ The first possibility implies that
$r\in\pi(G/N)$ since $\chi(1)/\theta(1)$ divides $|G/N|$ by \cite[Corollary~11.29]{Isaacs}. The
latter implies that $r\in\rho(N).$ Thus $r\in \pi(G/N)\cup\rho(N).$ As $\pi(G/N)=\pi(M/N)$ by
Lemma~\ref{Graphs of almost simple groups without triangles}, we deduce that $r\in\pi(M/N)\cup
\rho(N).$ In particular, as $\pi(M/N)=\rho(M/N)\subseteq \rho(M)$ and $\rho(N)\subseteq \rho(M),$
we deduce that $r\in\rho(M).$ Thus $\rho(G)\subseteq\rho(M).$ Therefore, $\rho(G)=\rho(M)$ as
required.
\end{proof}
The following important result will be used frequently. The proof of this lemma makes use of the
classification of subgroups of prime power index of simple groups in the form of Lemma~\ref{power
index and solvable} and a result due to Higgs (see \cite[Theorem~2.3]{Moreto}).
\begin{lemma}\label{Extendible characters}
Let $N$ be a normal subgroup of a group $G$ such that $G/N\cong S,$ where $S$ is a nonabelian
simple groups. Let $\theta\in\Irr(N).$ Then either $\chi(1)/\theta(1)$ is divisible by two distinct
primes in $\pi(G/N)$ for some $\chi\in\Irr(G|\theta)$ or $\theta$ is extendible to
$\theta_0\in\Irr(G)$ and $G/N\cong \rm{A}_5$ or $\PSL_2(8).$
\end{lemma}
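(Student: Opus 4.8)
The plan is to analyse $\Irr(G|\theta)$ through the inertia group $T=I_G(\theta)$, with the main engine being Higgs's Theorem (\cite[Theorem~2.3]{Moreto}), which in the relevant form reads: if $\theta\in\Irr(N)$ is $G$-invariant and $\chi(1)/\theta(1)$ is a prime power for every $\chi\in\Irr(G|\theta)$, then either $G/N$ is solvable or there is a normal subgroup $N\le M\unlhd G$ with $M/N\cong\textrm{A}_5$ or $\PSL_2(8)$ to which $\theta$ extends. So I would suppose the first alternative of the Lemma fails, that is, no $\chi\in\Irr(G|\theta)$ has $\chi(1)/\theta(1)$ divisible by two distinct primes; since $\chi(1)/\theta(1)$ divides $|G/N|$ by \cite[Corollary~11.29]{Isaacs}, every such quotient is then a prime power, and I must deduce that $\theta$ extends to $G$ and that $S\cong\textrm{A}_5$ or $S\cong\PSL_2(8)$.

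The first task is to force $\theta$ to be $G$-invariant. If it were not, then $T/N$ would be a proper subgroup of $S$, and by the Clifford correspondence every $\chi\in\Irr(G|\theta)$ equals $\psi^G$ for some $\psi\in\Irr(T|\theta)$, so that $\chi(1)/\theta(1)=|S:T/N|\cdot\bigl(\psi(1)/\theta(1)\bigr)$. As this is always a prime power, $n:=|S:T/N|$ is a prime power $r^a$ and every $\psi(1)/\theta(1)$ is a power of the same prime $r$. By Lemma~\ref{power index and solvable}, $T/N$ is either a nonabelian Hall subgroup of $S$ or nonsolvable. In the Hall case $r\nmid|T/N|$; since $T/N$ is nonabelian there is some $\psi\in\Irr(T|\theta)$ with $\psi(1)/\theta(1)>1$ (if $\theta$ extends to $T$ this follows from Gallagher's Theorem, Lemma~\ref{Gallagher theorem}, as $T/N$ is nonabelian; if not, every member of $\Irr(T|\theta)$ already has relative degree exceeding $1$), and as $\psi(1)/\theta(1)$ divides $|T/N|$ it is coprime to $r$, so $r^a\bigl(\psi(1)/\theta(1)\bigr)$ has two distinct prime divisors, a contradiction. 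In the nonsolvable case I would apply Higgs's Theorem one level down to $(T,N,\theta)$: the relative degrees over $\theta$ being prime powers and $T/N$ being nonsolvable, there is $N\le M\unlhd T$ with $M/N\cong\textrm{A}_5$ or $\PSL_2(8)$ and $\theta$ extendible to $M$; by Gallagher's Theorem together with the Clifford correspondence for $M\unlhd T$, two coprime nontrivial elements of $\cd(M/N)$ (for instance $3$ and $4$, or $7$ and $8$) divide relative degrees $\psi(1)/\theta(1)$ of suitable $\psi\in\Irr(T|\theta)$, which forces $r$ to equal two different primes, again absurd. Hence $\theta$ is $G$-invariant.

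Now $\theta$ is $G$-invariant, $S=G/N$ is simple and nonsolvable, and every $\chi(1)/\theta(1)$ is a prime power. Higgs's Theorem applied to $(G,N,\theta)$ produces $N\le M\unlhd G$ with $M/N\cong\textrm{A}_5$ or $\PSL_2(8)$ and $\theta$ extendible to $M$; since $M/N$ is a nontrivial normal subgroup of the simple group $G/N$, it follows that $M=G$, so $\theta$ extends to $G$ and $S\cong\textrm{A}_5$ or $\PSL_2(8)$, which is the second alternative. (If one only has Higgs's Theorem in the weaker shape asserting $S\cong\textrm{A}_5$ or $\PSL_2(8)$ without the extendibility, the latter is recovered immediately: $\PSL_2(8)$ has trivial Schur multiplier, so Lemma~\ref{Extension} applies, whereas for $\textrm{A}_5$ a non-extending $\theta$ would, after replacing $(G,N,\theta)$ by an isomorphic character triple with central kernel, make the relative degrees include the faithful irreducible degrees of $2\cdot\textrm{A}_5$, one of which is $6=2\cdot3$, contradicting the prime-power assumption.)

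I expect the subtlest point to be the reduction to the $G$-invariant case, and within it the nonsolvable-inertia subcase, where one must combine the Clifford correspondence, the dichotomy of Lemma~\ref{power index and solvable}, and a one-level-down application of Higgs's Theorem to $(T,N,\theta)$ while keeping careful track of divisibility among the relative degrees. The genuinely $G$-invariant case and the Hall subcase are comparatively routine once the correct form of Higgs's Theorem is in hand; in effect the Lemma is that theorem of Higgs packaged together with Guralnick's classification of prime-power-index subgroups in the form of Lemma~\ref{power index and solvable}.
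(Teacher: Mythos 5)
There is a genuine gap, and it sits exactly at the heart of the lemma. Your ``relevant form'' of Higgs's Theorem is not what \cite[Theorem~2.3]{Moreto} says: that result (as the paper's own uses of it show) requires all the quotients $\chi(1)/\theta(1)$, $\chi\in\Irr(G|\theta)$, to be powers of \emph{one fixed prime} and concludes only that $G/N$ is solvable; it has no ``$\textrm{A}_5$ or $\PSL_2(8)$'' clause and no extendibility clause. The mixed-prime statement you take as your engine --- prime powers of varying primes force either solvability or a normal section $M/N\cong\textrm{A}_5$ or $\PSL_2(8)$ to which $\theta$ extends --- is essentially the $G$-invariant case of the lemma itself, so your treatment of that case is circular (and your fallback parenthetical about the Schur multipliers of $\PSL_2(8)$ and $2\cdot\textrm{A}_5$ only recovers extendibility \emph{after} the isomorphism type of $G/N$ is known, which you never establish independently). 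In the inertia-group reduction the damage is repairable: there $|G:I_G(\theta)|=r^a$ forces every relative degree over $\theta$ in $I_G(\theta)$ to be a power of the single prime $r$, so the genuine fixed-prime Higgs theorem applies directly, which is exactly what the paper does; your Hall-subgroup subcase is fine as written. But in the $G$-invariant case the relative degrees are prime powers of possibly different primes, and no fixed prime is available a priori.

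What is missing are the two ideas the paper uses to bridge this. First, to get extendibility: if $\theta$ is $G$-invariant but does not extend, replace $(G,N,\theta)$ by an isomorphic character triple $(L,A,\lambda)$ with $L$ perfect, $A\leq \textrm{Z}(L)$ and $\lambda$ a nontrivial linear character; by \cite[Lemma~2.1]{Moreto} some fixed prime $p$ dividing $o(\lambda)$ divides $\chi(1)$ for every $\chi\in\Irr(L|\lambda)$, so all these degrees are powers of the one prime $p$, and the fixed-prime Higgs theorem forces $L/A\cong G/N$ to be solvable, a contradiction. Second, to identify $G/N$: once $\theta$ extends, Gallagher's Theorem makes every $\psi(1)$, $\psi\in\Irr(G/N)$, a prime power, and the Corollary of Manz--Staszewski--Willems (a nonsolvable group all of whose character degrees are prime powers) pins down $G/N\cong\textrm{A}_5$ or $\PSL_2(8)$. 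Without these two steps --- or a verified reference for the strong mixed-prime, extendibility-included statement you attribute to Higgs --- your argument does not prove the lemma.
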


\begin{proof}

Let $\theta\in\Irr(N).$ By \cite[Corollary~11.29]{Isaacs}, for any $\chi\in\Irr(G|\theta),$ we have
that $\chi(1)/\theta(1)$ divides $|G/N|.$ Thus if $\chi(1)/\theta(1)$ is divisible by two distinct
primes then it is also divisible by two primes in $\pi(G/N).$ Hence we can assume that
$\chi(1)/\theta(1)$ is a prime power for any $\chi\in\Irr(G|\theta)$ and we will show that $\theta$
is extendible to $G$ and that $G/N\cong \rm{A}_5$ or $\PSL_2(8).$

We now claim that $\theta$ is $G$-invariant. Suppose the contrary. Let $I=I_G(\theta).$ Then
$N\unlhd I$ and $I/N$ is a proper subgroup of $G/N.$ Writing $\theta^I=\sum_{i=1}^m e_i\phi_i,$
where $\phi_i\in\Irr(I|\theta)$ and $m\geq 1.$ Then for each $i,$ we have that
$\phi_i^G\in\Irr(G|\theta)$ and $\phi_i^G(1)=|G:I|e_i\theta(1)\in\cd(G).$ Therefore, $|G:I|e_i$ is
a prime power for all $i$ with $1\leq i\leq m.$ In particular, $|G:I|=r^a,$ where $r$ is a prime
and $a\geq 1.$ By Lemma~\ref{power index and solvable}, we have that either $I/N$ is nonsolvable or
$I/N$ is a nonabelian Hall subgroup of $G/N.$ Assume that the latter case holds. Since $I/N$ is
nonabelian, there exists $j$ with $1\leq j\leq m$ such that $e_j>1.$ Since $e_j$ divides $|I/N|,$
where $\gcd(|I/N|,|G:I|)=1,$ we deduce that $e_j$ is prime to $r^a.$ Thus
$\phi_j^G(1)/\theta(1)=r^ae_j$ is divisible by at least two distinct primes, a contradiction.
Assume now that the former case holds. Then $I/N$ is a nonsolvable group and $\theta$ is
$I$-invariant. It follows from \cite[Theorem~2.3]{Moreto} that $\phi_k(1)/\theta(1)$ is not an
$r$-power for some $k$ with $1\leq k\leq m.$ Hence $\phi_k(1)/\theta(1)$ is divisible by some prime
$s\neq r$ and so $\phi_k^G(1)/\theta(1)$ is divisible by two distinct primes. This contradiction
shows that $\theta$ must be $G$-invariant.

Now assume that $\theta$ is $G$-invariant but not extendible to $G.$ Then $(G,N,\theta)$ is
character triple isomorphic to the triple $(L,A,\lambda)$ by \cite[Chapter~11]{Isaacs}, where $L$
is perfect, $A\leq \textrm{Z}(L),$ $L/A\cong G/N$ and $\lambda\in\Irr(A)$ is nontrivial. Then for
any $\chi\in\Irr(L|\lambda),$ we have that $\chi(1)/\lambda(1)=\chi(1)$ is a nontrivial prime
power. Since $\lambda$ is nontrivial, we obtain that $o(\lambda)$ is nontrivial and so $p\mid
o(\lambda)$ for some prime $p.$ By \cite[Lemma~2.1]{Moreto}, we have that $p\mid\chi(1)$ for all
$\chi\in\Irr(L|\lambda)$ and thus $\chi(1)$ is a nontrivial $p$-power for all
$\chi\in\Irr(L|\lambda).$ Now \cite[Theorem~2.3]{Moreto} yields that $L/A\cong G/N$ is solvable,
which is impossible.

Finally, assume that $\theta$ is extendible to $\theta_0\in\Irr(G).$ By Gallagher's Theorem,
$\theta_0\psi$  are all the irreducible constituents of $\theta^G,$ where $\psi\in\Irr(G/N).$
Therefore,  $\theta_0(1)\psi(1)/\theta(1)=\psi(1)$ is a prime power for all $\psi\in\Irr(G/N).$ By
\cite[Corollary]{Manz-Staszewski-Willems}, we obtain that $G/N\cong \textrm{A}_5$ or $\PSL_2(8).$
\end{proof}

\begin{lemma}\label{Quotient} Let  $N$ be a
normal subgroup of a group $G$ such that $G/N\cong \PSL_2(2^f)$ and $|\rho(G)|=|\pi(G/N)|=5.$ If
$\Delta(G)$ has at most two connected components, then $\Delta(G)$ contains a triangle.
\end{lemma}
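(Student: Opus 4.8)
The plan is to argue by contradiction, assuming $\Delta(G)$ has no triangle. Since $G/N\cong\PSL_2(2^f)$ is simple it has no nontrivial normal abelian Sylow subgroup, so by Ito--Michler's Theorem $\pi(G/N)=\rho(G/N)$; as $\Delta(G/N)$ is a subgraph of $\Delta(G)$ this gives $\pi(G/N)\subseteq\rho(G)$, and comparing cardinalities we get $\rho(G)=\pi(G/N)$ and $\Delta(G/N)=\Delta(\PSL_2(2^f))$ is a spanning subgraph of $\Delta(G)$. Now $\cd(\PSL_2(2^f))=\{1,2^f-1,2^f,2^f+1\}$, and since $2^f-1$, $2^f$, $2^f+1$ are pairwise coprime with $2^f$ a prime power, $\Delta(G/N)$ has exactly three connected components: the isolated vertex $2$, the complete graph on $\pi(2^f-1)$, and the complete graph on $\pi(2^f+1)$. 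Because $\Delta(G)$ has no triangle, each of these two cliques has at most two vertices, whence $|\pi(2^f-1)|=|\pi(2^f+1)|=2$; write $\pi(2^f-1)=\{r_1,r_2\}$ and $\pi(2^f+1)=\{s_1,s_2\}$. A short check of $f\le 5$ shows this forces $f\ge 6$; in particular $2^f\neq 4$, and $G/N$ is isomorphic to neither $\mathrm{A}_5$ nor $\PSL_2(8)$, since those groups have only three prime divisors.

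Since $\Delta(G)$ is obtained from the three-component graph $\Delta(G/N)$ by adding edges but has at most two components, there is an edge of $\Delta(G)$ joining two of those three components. After possibly interchanging $2^f-1$ with $2^f+1$ and relabelling, either (A) $\{2,r_1\}$ is an edge of $\Delta(G)$, or (B) $\{r_1,s_1\}$ is; let $\pi_0$ denote this pair of primes and fix $\chi\in\Irr(G)$ with $\pi_0\subseteq\pi(\chi(1))$, so in fact $\pi(\chi(1))=\pi_0$ because $\Delta(G)$ has no triangle. Choose $\theta\in\Irr(N)$ lying under $\chi$. Suppose first that $\theta$ is $G$-invariant. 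Since the Schur multiplier of $\PSL_2(2^f)$ is trivial (as $2^f\neq 4$), $\theta$ extends to $G$ by Lemma~\ref{Extension}, so by Gallagher's Theorem (Lemma~\ref{Gallagher theorem}) the irreducible constituents of $\theta^G$ have degrees $\theta(1)$, $\theta(1)(2^f-1)$, $\theta(1)2^f$, $\theta(1)(2^f+1)$, and $\chi(1)$ is one of these. The values $\theta(1)(2^f-1)$ and $\theta(1)(2^f+1)$ are impossible, being divisible by $r_2\notin\pi_0$, respectively $s_2\notin\pi_0$; hence $\chi(1)\in\{\theta(1),\theta(1)2^f\}$, and since $r_1\in\pi_0=\pi(\chi(1))$ and $r_1\neq 2$ we get $r_1\mid\theta(1)$. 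But then $\theta(1)(2^f+1)\in\cd(G)$ is divisible by the three distinct primes $r_1,s_1,s_2$, so $\Delta(G)$ has a triangle --- a contradiction.

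So $\theta$ is not $G$-invariant. Put $I=I_G(\theta)$; then $N\le I<G$ and $I/N$ is a proper subgroup of $\PSL_2(2^f)$. By the Clifford correspondence every irreducible constituent of $\theta^G$ is induced from $\Irr(I|\theta)$, so $[G:I]=[\PSL_2(2^f):I/N]$ divides $\chi(1)$, and therefore $\pi([\PSL_2(2^f):I/N])\subseteq\pi_0$. The plan is now to contradict this by running through Dickson's classification of the subgroups of $\PSL_2(2^f)$ and checking that the index of any proper subgroup has a prime divisor outside the two-element set $\pi_0$. If $H=I/N$ is contained in a Borel subgroup --- in particular if $H$ is a $2$-subgroup or a full Borel --- then $2^f+1$ divides the index, contributing $s_2\notin\pi_0$. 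If $H$ is contained in the normaliser of a maximal torus, then the index is divisible by $2$ and by both primes of $\pi(2^f-1)$ or both primes of $\pi(2^f+1)$, three primes in all. If $H\cong\mathrm{A}_4$ or $\mathrm{A}_5$, the index is divisible by $2$ and by all but at most two of $r_1,r_2,s_1,s_2$, again more than two primes. Finally, if $H\cong\PSL_2(2^{f'})$ with $f'\mid f$ and $f'<f$, then $[\PSL_2(2^f):H]=2^{f-f'}(2^{2f}-1)/(2^{2f'}-1)$ is divisible by $2$ (as $f'<f$) and, by Zsigmondy's Theorem (applicable since $2f\ge 12$), by a primitive prime divisor $\ell_{2f}(2)$ of $2^{2f}-1$, which does not divide $2^{2f'}-1$ nor $2^f-1$ and so lies in $\{s_1,s_2\}$; thus the index involves $2$ together with a prime of $\{s_1,s_2\}$, a pair that is not contained in $\pi_0$ in either Case (A) or Case (B). In every case $\pi([\PSL_2(2^f):I/N])\not\subseteq\pi_0$, a contradiction, and this finishes the proof.

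I expect essentially all of the work to sit in the last paragraph: organising the case-by-case examination of the subgroups of $\PSL_2(2^f)$ --- invoking Lemma~\ref{2-powers} or Zsigmondy's Theorem for the subfield subgroups $\PSL_2(2^{f'})$ --- and verifying in each case that the subgroup index is divisible by a prime lying outside the designated pair $\pi_0$. The $G$-invariant case, by contrast, is disposed of at once by Gallagher's Theorem.
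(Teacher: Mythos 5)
Your overall strategy is the same as the paper's (split on whether $\theta$ is $G$-invariant; Gallagher's Theorem in the invariant case, a subgroup-index analysis in $\PSL_2(2^f)$ in the non-invariant case), and your treatment of the invariant case and of cases (i)--(iii) of the subgroup analysis is fine. The problem is the reduction ``after possibly interchanging $2^f-1$ with $2^f+1$'': this is not a legitimate symmetry. The integers $2^f-1$ and $2^f+1$ play asymmetric roles in the group theory of $\PSL_2(2^f)$ (the Borel has order $2^f(2^f-1)$, while the primitive prime divisor $\ell_{2f}(2)$ of a subfield-subgroup index divides $2^f+1$), so in Case (A) you must also handle the possibility that the prime adjacent to $2$ divides $2^f+1$, i.e.\ $\pi_0=\{2,s_1\}$ with $s_1\in\pi(2^f+1)$. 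In the Gallagher branch this is harmless (swap the roles of $2^f\pm1$), but in the Clifford branch your subfield case breaks down: for $I/N\cong\PSL_2(2^{f'})$ with $f'\mid f$, $f'<f$, you only exhibit the primes $2$ and $\ell_{2f}(2)\in\{s_1,s_2\}$ dividing the index $2^{f-f'}(2^{2f}-1)/(2^{2f'}-1)$, and if $\ell_{2f}(2)=s_1$ both of these lie in $\pi_0=\{2,s_1\}$, so no contradiction is reached. Thus, as written, the proof has a genuine gap in exactly one configuration: edge at $2$ toward $\pi(2^f+1)$, inertia quotient a subfield subgroup.

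To close it you need the stronger fact that the odd part $(2^{2f}-1)/(2^{2f'}-1)$ is never a prime power, so that it always contributes \emph{two} distinct odd primes (or one outside $\pi(2^f+1)$). This is precisely what the paper does: it first passes to a maximal subgroup containing $I/N$, so the relevant subfield overgroups are $\PSL_2(2^a)$ with $n=f/a$ prime, and then for $n=2$ the quotient equals $2^f+1$, not a prime power since $|\pi(2^f+1)|=2$, while for odd primes $n$ Lemma~\ref{2-powers} (two Zsigmondy primes $\ell_{2f}(2)$ and $\ell_f(2)$) rules it out. You even anticipate invoking Lemma~\ref{2-powers} in your closing remark, but the argument you actually wrote never uses it; inserting this prime-power analysis into your category (iv), for both possible locations of the neighbour of $2$, repairs the proof and brings it in line with the paper's.
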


\begin{proof} Suppose that $\Delta(G)$ has at most two connected components
and contains no triangles. As $|\pi(G/N)|=5$ and $\Delta(G/N)$ has no triangles, we deduce that
$|\pi(2^f-1)|=|\pi(2^f+1)|=2$ and $f\geq 6.$ We have that
\begin{equation}\label{eqn1}\cd(\PSL_2(2^f))=\{1,2^f-1,2^f,2^f+1\}.
\end{equation}
Therefore $\Delta(G/N)$  has three connected components $$\{2\},\pi(2^f-1)
~\mbox{and}~ \pi(2^f+1).$$ We now consider the case when $\Delta(G)$ is connected and disconnected
separately.

{\bf Case} $\Delta(G)$ is connected. Then the vertex $2$ is adjacent to some vertex $r,$  where
$r\in\pi(2^f-1)$ or $r\in\pi(2^f+1).$ In particular, $r$ is odd. Hence there exists
$\chi\in\Irr(G)$ with $\pi(\chi(1))=\{2,r\}.$ Let $\theta\in\Irr(N)$ be any irreducible constituent
of $\chi_N.$ Then $\theta$ is nontrivial as $2$ is an isolated vertex in $\Delta(G/N).$

Assume first that $\theta$ is $G$-invariant. Since $f\geq 6,$ we deduce that the Schur multiplier
of $G/N\cong \PSL_2(2^f)$ is trivial and thus by Lemma~\ref{Extension}, $\theta$ is extendible to
$\theta_0\in\Irr(G).$ Now by Lemma~\ref{Gallagher theorem}, we have that
$\Irr(G|\theta)=\{\theta_0\lambda|\lambda\in\Irr(G/N)\}.$
 In particular, $\chi=\theta_0\mu$ for some
$\mu\in\Irr(G/N)$ as $\chi\in\Irr(G|\theta).$ As $\mu\in\Irr(G/N),$ we deduce that
$\mu(1)\in\{1,2^f,2^f\pm 1\}.$ If $\mu(1)=1$ or $2^f,$ then $r$ must divide $\theta_0(1)$ because
$\pi(\chi(1))=\pi(\theta(1))\cup\pi(\mu(1)).$ Now if $r\in\pi(2^f+1),$ then by taking
$\gamma\in\Irr(G/N)$ with $\gamma(1)=2^f-1,$ we have that $\theta_0(1)\gamma(1)\in\cd(G)$ is
divisible by three distinct primes, which is a contradiction. The case when $r\in\pi(2^f-1)$ can be
argued similarly. Therefore $\mu(1)=2^f-1$ or $2^f+1.$ As both $2^f-1$ and $2^f+1$ are divisible by
two distinct odd primes and that $\mu(1)\mid\chi(1),$ we deduce that $\chi(1)$ is divisible by two
distinct odd primes, which is impossible as $\pi(\chi(1))=\{2,r\}.$

Assume next that $\theta$ is not $G$-invariant. Then $N\leq I=I_G(\theta)< G.$ Let $K$ be a
subgroup of $G$ such that $I/N\leq K/N$ and that $K/N$ is a maximal subgroup of $G/N\cong
\PSL_2(q)$ with $q=2^f\geq 2^6.$ Writing $\theta^I=\sum_{i=1}^m e_i\phi_i,$ where  $e_i\geq 1$ and
$\phi_i\in\Irr(I|\theta)$ for all $1\leq i\leq m.$ By \cite[Theorem~19.6]{Huppert}, for each $i,$
we have that
 $\phi_i^G(1)=|G:I|e_i\theta(1)\in\cd(G).$ Since $\chi\in\Irr(G|\theta),$ we have that
$\chi=\phi_j^G$ for some $j$ with $1\leq j\leq m.$ Hence $\chi(1)=|G:I|e_j\theta(1)=2^cr^d,$ where
$c,d\geq 1$ are integers. Notice that $|G:I|$ is divisible by $|G:K|=|G/N:K/N|,$ which is the index
of a maximal subgroup of $G/N\cong \PSL_2(2^f).$ Thus $\pi(|G:K|)\subseteq \{2,r\},$ which means
that $|G:K|$ is divisible by at most one odd prime. Checking the list of maximal subgroups of
$\PSL_2(2^f)$ in \cite[Hauptsatz II.8.27]{Huppert67}, the index $|G:K|$ is one of the following
numbers:
\begin{equation}\label{eqn3} 2^{f-1}(2^f+1), 2^{f-1}(2^f-1), 2^f+1,\frac{2^f(2^{2f}-1)}{2^a(2^{2a}-1)},\end{equation}
 where $f/a=n\geq 2$
is a prime. Since $f\geq 6$ and both $2^f-1$ and $2^f+1$ have two distinct odd prime divisors, we
can see that the first three possibilities for $|G:K|$ cannot happen. Finally, assume that $|G:K|$
takes the last value in \eqref{eqn3}. Since
 $(2^{2f}-1)/(2^{2a}-1)>1$ is odd and $\pi(|G:K|)\subseteq\{2,r\},$ we must have that
\begin{equation}\label{eqn2}\frac{2^{2f}-1}{2^{2a}-1}=r^k\end{equation} for some integer $k\geq 1.$
If $n=2,$ then ${(2^{2f}-1)}/{(2^{2a}-1)}=2^f+1=r^k.$ But this would contradict our assumption that
$2^f+1$ has two distinct prime divisors. Hence $n>2$ is a prime. But then Lemma~\ref{2-powers}
shows that equation \eqref{eqn2} cannot happen. Thus this case cannot happen.

{\bf Case} $\Delta(G)$ has two connected components. By \cite[Theorem~6.3]{Lewis-White03}, the
smaller connected component of $\Delta(G)$ has only one vertex. Thus this vertex must be $2.$ Hence
$\pi(2^f-1)$ and $\pi(2^f+1)$ must lie in the same connected component and so there exists
$\chi\in\Irr(G)$ such that $\pi(\chi(1))=\{u,v\},$ where $u\in\pi(2^f-1)$ and $v\in\pi(2^f+1).$
Since $|\pi(2^f\pm 1)|=2$ and $\gcd(2^f-1,2^f+1)=1,$ we have that $\pi(2^f-1)=\{u,r\}$ and
$\pi(2^f+1)=\{v,s\},$ where $\{u,r\}\cap \{v,s\}$ is empty. It follows that $r$ and $s$ do not
divide $\chi(1),$ and so neither $2^f+1$ nor $2^f-1$ can divide $\chi(1).$ Let $\theta\in\Irr(N)$
be an irreducible constituent of $\chi_N.$ Clearly, $\theta$ is not the principal character of $N.$
Assume first that $\theta$ is not $G$-invariant and let $I=I_G(\theta).$ Then $I/N$ is a proper
subgroup of $G/N\cong\PSL_2(2^f)$ and thus $|G:I|$ is divisible by the index of some maximal
subgroup of $G.$ Furthermore, this index must be odd as $|G:I|$ divides $\chi(1).$ By \eqref{eqn3},
we obtain that $2^f+1$ divides $|G:I|$ and so divides $\chi(1),$ a contradiction. Thus $\theta$ is
$G$-invariant. As the Schur multiplier of $G/N\cong \PSL_2(2^f)$ with $f\geq 6,$ is trivial, we
deduce from Lemma~\ref{Extension} that $\theta$ extends to $\theta_0\in\Irr(G).$ Now Gallagher's
Theorem yields that $\Irr(G|\theta)=\{\theta_0\psi|\psi\in\Irr(G/N)\}.$ Since
$\chi\in\Irr(G|\theta),$ we deduce that $\chi=\theta_0\mu$ for some $\mu\in\Irr(G/N)$ and so
$\mu(1)\in\{1,2^f,2^f\pm 1\}$ and $\mu(1)\mid\chi(1).$ As $\chi(1)$ is odd and $2^f\pm 1\nmid
\chi(1),$ we deduce that $\mu(1)=1.$ Thus $\chi(1)=\theta_0(1).$ By taking $\gamma\in\Irr(G/N)$
with $\gamma(1)=2^f-1,$ we deduce that $\theta_0(1)\gamma(1)=\chi(1)(2^f-1)\in\cd(G),$ which is
impossible as this degree is divisible by three distinct primes $u,v$ and $r.$ Therefore,
$\Delta(G)$ always contains a triangle.
\end{proof}


In the last result of this section, we prove the following technical result.
\begin{lemma}\label{Invariant characters}
Let  $N\unlhd G$ be a solvable subgroup of $G.$  Suppose that $G/N$ is a nonabelian simple group
such that $\Delta(G)$ has no triangles. Let $\tau=\rho(G)-\pi(G/N).$ Then the following hold.
\begin{enumerate}
\item $\tau\subseteq \rho(N),$ there is no edges among primes in $\tau$ and  $|\tau|\leq 2.$
\item If $\tau\neq \emptyset,$ then for any $r\in\tau$ and  $\theta\in\Irr(N)$ with $r\mid\theta(1),$
$\theta$ is extendible to $\theta_0\in\Irr(G)$ and $G/N\cong \rm{A}_5$ or $\PSL_2(8).$
\end{enumerate}
\end{lemma}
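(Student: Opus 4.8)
The plan is to derive everything from Lemma~\ref{Extendible characters}, which is the key input. For part (1), first observe that since $N$ is solvable, $\rho(N)$ contains exactly the primes dividing $|N|$ that show up in degrees; more relevantly, any $r\in\tau=\rho(G)-\pi(G/N)$ must come from a character degree $\chi(1)$ with $\chi\in\Irr(G)$, and writing $\theta\in\Irr(N)$ for a constituent of $\chi_N$, the quotient $\chi(1)/\theta(1)$ divides $|G/N|=|S|$ by \cite[Corollary~11.29]{Isaacs}, hence is a $\pi(G/N)$-number. Since $r\notin\pi(G/N)$, it follows that $r\mid\theta(1)$, so $r\in\rho(N)$; thus $\tau\subseteq\rho(N)$. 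To bound $|\tau|$ and rule out edges within $\tau$: if $r,s\in\tau$ were joined by an edge, or if $|\tau|\geq 3$, then because $\{2,3\}\subseteq\pi(S)\subseteq\rho(G)$ and $S\cong\PSL_2(q)$ has $\{2,3\}$ joined to everything in $\pi(S)$ (and in particular $2$ is joined to $3$), I would look for a character degree witnessing two $\tau$-primes together, or two $\tau$-primes plus $2$ or $3$, producing a triangle. The cleaner route: for each $r\in\tau$ pick $\theta_r\in\Irr(N)$ with $r\mid\theta_r(1)$; by part (2) (proved next, or simultaneously) $\theta_r$ extends to $G$ and $G/N\cong\mathrm{A}_5$ or $\PSL_2(8)$, so by Gallagher's Theorem $\theta_r(1)\psi(1)\in\cd(G)$ for every $\psi\in\Irr(G/N)$. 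Taking $\psi(1)$ to be a degree of $\mathrm{A}_5$ or $\PSL_2(8)$ divisible by two primes of $\pi(G/N)$ (e.g.\ $6$ for $\mathrm{A}_5$, which is divisible by $2$ and $3$; or $7$-and... — more carefully, $\PSL_2(8)$ has degrees $7,8,9$, so $9$ gives the prime $3$ and there is a degree $7$; one uses that $G/N$ contributes at least two primes to such a product), we get a degree divisible by $r$ and by at least one prime of $\pi(G/N)$; iterating with two distinct $\tau$-primes $r,s$ one shows $r$ and $s$ must lie in the same situation and then a suitable product yields a triangle unless $|\tau|\leq 2$ and $\tau$ is an independent set.

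For part (2), fix $r\in\tau$ and $\theta\in\Irr(N)$ with $r\mid\theta(1)$. Apply Lemma~\ref{Extendible characters} with this $N\unlhd G$ and $G/N\cong S$: either there is $\chi\in\Irr(G|\theta)$ with $\chi(1)/\theta(1)$ divisible by two distinct primes of $\pi(G/N)$, or $\theta$ extends to $\theta_0\in\Irr(G)$ and $G/N\cong\mathrm{A}_5$ or $\PSL_2(8)$. In the first case, let $u,v\in\pi(G/N)$ be the two primes dividing $\chi(1)/\theta(1)$. Then $\chi(1)$ is divisible by $r$ (since $r\mid\theta(1)\mid\chi(1)$) and by $u$ and $v$. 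Since $r\notin\pi(G/N)\ni u,v$, the three primes $r,u,v$ are distinct, and $\{r,u,v\}$ induces a triangle in $\Delta(G)$ — contradicting the hypothesis that $\Delta(G)$ has no triangles. Hence the second alternative must hold, giving exactly the conclusion of part (2). Finally, I return to part (1): with $G/N\cong\mathrm{A}_5$ or $\PSL_2(8)$ now established whenever $\tau\neq\emptyset$, I finish the bound $|\tau|\leq 2$ and independence as sketched above, using Gallagher together with the explicit degree sets $\cd(\mathrm{A}_5)=\{1,3,4,5\}$ and $\cd(\PSL_2(8))=\{1,7,8,9\}$.

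The main obstacle I anticipate is the bookkeeping in part (1) showing $|\tau|\leq 2$ with no internal edges, rather than part (2), which is essentially an immediate invocation of Lemma~\ref{Extendible characters} plus the triangle-free hypothesis. The delicate point is that two distinct $\tau$-primes $r,s$ need not both divide the same $\theta(1)$, so one cannot directly multiply a single extended character by a $G/N$-degree to couple $r$ and $s$; instead one argues that if $|\tau|\geq 3$ then among any three $\tau$-primes, Gallagher applied to suitable extensions forces a prime of $\pi(G/N)$ to be joined to two of them, and combined with the fact that $2$ and $3$ are joined in $\Delta(S)$, a triangle appears. I would organize this by first reducing (via part (2)) to $G/N\in\{\mathrm{A}_5,\PSL_2(8)\}$, then noting that in both cases every nontrivial degree of $G/N$ is divisible by a prime that is a ``hub'' (namely $2$ or $3$ for $\mathrm{A}_5$, and $3$ for $\PSL_2(8)$ via the degree $9$, together with $7$), so that extending any $\theta$ with a $\tau$-prime in its degree and multiplying produces an edge from that $\tau$-prime to a fixed small set of primes in $\pi(G/N)$; a third $\tau$-prime then completes a triangle with two of these.
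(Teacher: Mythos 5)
Your part (2) and the containment $\tau\subseteq\rho(N)$ are proved exactly as in the paper: restrict $\chi$ to $N$, use \cite[Corollary~11.29]{Isaacs} to see $\chi(1)/\theta(1)$ is a $\pi(G/N)$-number, and invoke Lemma~\ref{Extendible characters}, killing its first alternative because three distinct primes dividing $\chi(1)$ would give a triangle. That much is fine. The independence of $\tau$ can also be salvaged along your lines (if $r,s\in\tau$ are adjacent via $\chi$, triangle-freeness forces $\pi(\chi(1))=\{r,s\}$, hence $\chi(1)/\theta(1)=1$ and $rs\mid\theta(1)$ for a constituent $\theta$ of $\chi_N$; extend $\theta$ and multiply by any nontrivial $\psi\in\Irr(G/N)$ to get a degree with three prime divisors), although the paper gets it more directly from \cite[Theorem~21.3]{Huppert} and Gallagher without passing through part (2).

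The genuine gap is the bound $|\tau|\leq 2$, and your proposed route cannot be repaired as described. First, your supporting facts are false in the only relevant cases: $\cd(\textrm{A}_5)=\{1,3,4,5\}$ and $\cd(\PSL_2(8))=\{1,7,8,9\}$, so every nontrivial degree of $G/N$ is a prime power, $6\notin\cd(\textrm{A}_5)$, and $2$ and $3$ are \emph{not} adjacent in $\Delta(G/N)$ --- indeed $\Delta(G/N)$ has no edges at all (that is precisely the Manz--Staszewski--Willems characterization used in Lemma~\ref{Extendible characters}). Consequently Gallagher only gives you edges joining each $r\in\tau$ to single primes of $\pi(G/N)$. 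Second, even granting all those edges, the configuration you reach --- every $\tau$-prime adjacent to every prime of $\pi(G/N)$, no edges inside $\tau$, and no edges inside $\pi(G/N)$ (any such edge would itself create a triangle with an element of $\tau$, so none can exist) --- is a complete bipartite graph, which is triangle-free for \emph{every} size of $\tau$. So no triangle ``appears'' from $|\tau|\geq 3$ by graph-theoretic bookkeeping alone, and you never actually use the solvability of $N$, which is the hypothesis that carries this step. The missing idea is P\'alfy's condition (Lemma~\ref{Palfy condition}) applied to $N$: since $\tau\subseteq\rho(N)$ and $N$ is solvable, any three primes of $\tau$ would yield $\theta\in\Irr(N)$ whose degree is divisible by two of them; as $\theta(1)$ divides some degree of $G$, those two $\tau$-primes would be adjacent in $\Delta(G)$, contradicting the independence of $\tau$ already established. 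This is exactly how the paper concludes $|\tau|\leq 2$.
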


\begin{proof}
Observe that $|\pi(\psi(1))|\leq 2$ for any $\psi\in\Irr(G)$ since $\Delta(G)$ has no triangles.

For $(1),$ if $\tau$ is empty, then there is nothing to prove. Hence we assume that $\tau$ is
nonempty. Let $r\in \tau.$ Since $r\in\rho(G)-\pi(G/N),$ there exists $\chi\in\Irr(G)$ with $r\mid
\chi(1)$ but $r\not\in\pi(G/N).$ Let $\theta\in\Irr(N)$ be any irreducible constituent of $\chi_N.$
By \cite[Corollary~11.29]{Isaacs}, we know that $\chi(1)/\theta(1)$ divides $|G/N|.$ As $r$ is
prime to $|G/N|,$ we deduce that $r$ is prime to $ \chi(1)/\theta(1)$ and thus $r\mid\theta(1),$
which means that $r\in\rho(N).$ Since $r$ is chosen arbitrarily in $\tau,$ we deduce that
$\tau\subseteq\rho(N).$ We next show that there is no edges among primes in $\tau.$ If $|\tau|\leq
1,$ then the result is clear. Now assume that there exist two distinct primes in $\tau,$ say $r<s,$
such that $r$ is adjacent to $s$ via $\chi\in\Irr(G).$ Since $|\pi(\chi(1))|\leq 2,$  we obtain
that $\pi(\chi(1))=\{r,s\}.$ Since $\{r,s\}\subseteq\tau=\rho(G)-\pi(G/N),$ both $r$ and $s$ are
prime to $|G/N|.$ Therefore, $\gcd(\chi(1),|G/N|)=1,$  so  $2<r<s.$ Thus by
\cite[Theorem~21.3]{Huppert}, we have that $\chi_N\in \Irr(N).$ By Lemma~\ref{Gallagher theorem},
we obtain that $\chi(1)\lambda(1)\in\cd(G)$ for all $\lambda\in\Irr(G/N).$ By taking any
$\mu\in\Irr(G/N)$ with $2\mid \mu(1),$ we see that $\chi(1)\mu(1)\in\cd(G)$ is divisible by three
distinct primes $2,r$ and $s$, which is a contradiction. Hence there is no edges among vertices in
$\tau.$ As $\tau\subseteq \rho(N),$ where $N$ is solvable by hypothesis and there is no edges in
$\tau$ by the previous claim, the supposition $|\tau|\geq 3$ would violate Lemma~\ref{Palfy
condition}. Hence $|\tau|\leq 2$ as wanted. This completes the proof of $(1).$

For $(2),$ let $r\in\tau$ and let $\theta\in\Irr(N)$ such that $r$ divides $\theta(1).$ By
Lemma~\ref{Extendible characters}, either $\chi(1)/\theta(1)$ is divisible by two distinct primes
in $\pi(G/N)$ for some $\chi\in\Irr(G|\theta)$ or $\theta$ is extendible to $\theta_0\in\Irr(G)$
and $G/N\cong \textrm{A}_5$ or $\PSL_2(8).$ If the first case holds, then since $r\nmid
\chi(1)/\theta(1),$ we deduce that $\chi(1)$ has at least three distinct prime, which contradicts
the fact that $|\pi(\chi(1))|\leq 2.$ So, this case cannot happen and thus the latter case holds
and this completes the proof of the lemma.

\end{proof}


\section{Proofs of main theorems}

We are now ready to prove our main results.
\begin{proof}[\bf Proof of Theorem~A] Assume that $G$ is a group whose prime graph $\Delta(G)$ has no triangles.
If $G$ is solvable, then $|\rho(G)|\leq 4$ by Lemma~\ref{Square or Triangle}. Thus we can assume
that $G$ is nonsolvable. Let $N$ be the solvable radical of $G.$ By Lemma~\ref{Reduction to almost
simple groups}, there exists a normal subgroup $M$ of $G$ such that $G/N$ is an almost simple group
with socle $M/N\cong\PSL_2(q)$ and $|\rho(G)|=|\rho(M)|,$ where $q\geq 4$ is a prime power. Since
$M\unlhd G,$ $\Delta(M)$ is a subgraph of $\Delta(G),$ so $\Delta(M)$ contains no triangles. By
Lemma~\ref{Graphs of simple groups without triangles}, we have that $|\pi(M/N)|\leq 5.$ Let
$\tau=\rho(M)-\pi(M/N).$ By Lemma~\ref{Invariant characters}$(1),$ we obtain that $|\tau|\leq 2,$
and if $\tau$ is nonempty, then $G/N\cong \textrm{A}_5$ or $\PSL_2(8).$ Assume first that $\tau$ is
empty. Then $|\rho(G)|=|\rho(M)|=|\pi(M/N)|\leq 5$ and we are done. So, we can assume that
$\tau\neq\emptyset.$ It follows that $G/N\cong A_5$ or $\PSL_2(8).$ In particular, $|\pi(G/N)|=3$
and thus $|\rho(G)|=|\rho(M)|=|\tau|+|\pi(G/N)|\leq 2+3=5.$ Thus $|\rho(G)|\leq 5$ in both cases.
The proof is now complete.
\end{proof}

\begin{proof}[\bf Proof of Theorem~B] Let $G$ be a counterexample with minimal order.
Then $|\rho(G)|=5$ and $\Delta(G)$ has no triangles but $G$ does not satisfy the conclusions of the
theorem. First of all, by Lemma~\ref{Square or Triangle} we can assume that $G$ is nonsolvable.
Furthermore, if $\Delta(G)$ has three connected components, then conclusion $(1)$ holds by
\cite[Theorem~4.1]{Lewis-White03}. Thus we can assume that $\Delta(G)$ has at most two connected
components. Let $N$ be the solvable radical of $G.$ By Lemma~\ref{Reduction to almost simple
groups}, there exists a normal subgroup $N\unlhd M\unlhd G$ such that $G/N$ is an almost simple
group with socle $M/N,$ where $M/N\cong \PSL_2(q)$ with $q\geq 4$ being a power of a prime $p.$
Furthermore, $\rho(G)=\rho(M).$ As $N\unlhd G$ and $\rho(G)=\rho(M),$ we deduce that $\Delta(M)$ is
a subgraph of $\Delta(G)$ with the same set of vertices. Notice that  $M$ is nonsolvable and $N$ is
also the solvable radical of $M.$

\begin{step}\label{StepA1} $M=G.$

Suppose the contrary. Then $M$ is a proper subgroup of $G$ and $\Delta(M)$ has no triangles with
$|\rho(M)|=5.$ By the minimality of $|G|,$ we deduce that either $\Delta(M)$ is the second graph in
Figure A and $M\cong \PSL_2(2^f)\times A$ with $|\pi(2^f\pm 1)|=2$ and $A$ being abelian or
$\Delta(M)$ is the first graph in Figure A and $M\cong H\times K,$ where $H$ and $K$ satisfy the
conditions in Case $(2)$ of Theorem~B.

Assume that the latter case holds. Since $\Delta(M)$ and $\Delta(G)$ have the same set of vertices,
we deduce that $\Delta(G)$ is obtained from $\Delta(M)$ by adding some edges. However no more edges
can be added to $\Delta(M)$ without forming a triangle and since $\Delta(G)$ contains no triangles,
we deduce that $\Delta(G)=\Delta(M).$ As $K\unlhd M$ is solvable, we deduce that $K\leq N\unlhd M.$
As $M/K$ is nonabelian simple, we must have that $K=N$ and so $M/N\cong H,$ where
$H\cong\textrm{A}_5$ or $\PSL_2(8).$ Since $M=H\times K$ and $\Delta(M)=\Delta(G)$ is the first
graph in Figure A, we deduce that each prime in $\pi(H)$ is of degree $2$ and each prime in
$\rho(K)$ is of degree $3$ in $\Delta(G).$ Hence there is no edges among primes in $\pi(H).$ Since
$G/N$ is an almost simple group with socle $M/N\cong H$ and $|G/N:M/N|$ is nontrivial, we deduce
that $G/N\cong \textrm{A}_5\cdot 2$ or $\PSL_2(8)\cdot 3$ by using \cite{ATLAS}. However both cases
are impossible since $G/N$ always possesses a character degree which is divisible by two distinct
primes in $\pi(G/N)=\pi(H)$ and hence there is an edge among primes in $\pi(H),$ contradicting our
previous claim. Therefore, this case cannot happen.

Assume now that the former case holds. Since $A$ is an abelian normal subgroup of $M$ and $M/A$ is
nonabelian simple, with the same reasoning as in the previous paragraph, we deduce that $N=A$ and
so $M/N\cong \PSL_2(2^f)$ and $|\rho(M)|=|\pi(M/N)|=5.$ Hence, $|\pi(G/N)|=5$ as $|\pi(M/N)|\leq
|\pi(G/N)|\leq |\rho(G)|=5.$ Since $G/N$ is an almost simple group with socle $M/N,$ $|\pi(G/N)|=5$
and $\Delta(G/N)$ has no triangles, by Lemma~ \ref{Graphs of almost simple groups without
triangles} we deduce that $G/N=M/N$ and thus $G=M.$ However this contradicts our assumption that
$M$ is a proper subgroup of $G.$

Therefore $M=G$ as we wanted.
\end{step}

\begin{step}\label{StepA2} Let $\tau=\rho(G)-\pi(G/N).$ Then $G/N\cong \textrm{A}_5$ or $\PSL_2(8),$  $|\pi(G/N)|=3,$ $|\tau|=2$
 and if $r\in \tau$ and
$\theta\in\Irr(N)$ with $r\mid \theta(1),$ then $\theta$ is extendible to $\theta_0\in\Irr(G).$

From Step~\ref{StepA1}, we have that $G/N\cong \PSL_2(q)$ with $q\geq 4.$ Assume first that $\tau$
is empty. It follows that $\rho(G)=\pi(G/N),$ so $|\pi(G/N)|=|\rho(G)|=5.$ We deduce from
Lemma~\ref{Graphs of almost simple groups without triangles} that $G/N\cong \PSL_2(2^f)$ where
$f\geq 6$ and $|\pi(2^f\pm 1)|=2.$  But then since $\Delta(G)$ has at most two connected
components, Lemma~\ref{Quotient} yields a contradiction. Hence, we conclude that $\tau\neq\emptyset$
and thus $G/N\cong \textrm{A}_5$ or $\PSL_2(8)$ by Lemma~\ref{Invariant characters}$(2).$
Therefore, $|\pi(G/N)|=3$ and so $|\tau|=2$ since $|\rho(G)|=|\tau|+|\pi(G/N)|=5.$ The remaining
statement follows from Lemma~\ref{Invariant characters}$(2)$ since $\tau$ is nonempty.
\end{step}

Writing $\pi(G/N)=\{p_1,p_2,p_3\}$ and $\tau=\{r_1,r_2\}.$ Then $r_1\neq r_2,$ $p_i\neq p_j$ for
$1\leq i\neq j\leq 3$ and $\{p_1,p_2,p_3\}\cap\{r_1,r_2\}=\emptyset.$

\begin{step}\label{StepA3} For each $i=1,2,$ $r_i$ is of degree $3$ and for each $j=1,2,3,$ $p_j$ is of degree $2$ in
$\Delta(G).$ Hence, there is no edges among primes $p_j,1\leq j\leq 3,$ in the graph $\Delta(G).$

By Lemma~\ref{Invariant characters}$(1),$ we know that $\tau\subseteq\rho(N)$ and so for each
$i=1,2,$ there exists $\theta_i\in\Irr(N)$ with $r_i\mid\theta_i(1).$ Also, $r_1$ and $r_2$ are not
adjacent in $\Delta(G).$ By Step~\ref{StepA2}, both $\theta_i$ extend to $G$ and hence by applying
Gallagher's Theorem, since $\pi(G/N)=\{p_1,p_2,p_3\},$ we deduce that each vertex $r_i$ is
connected to all vertices $p_j,$ and so $r_i$ is of degree $3$ in $\Delta(G).$ Finally, there is no
edges among primes $p_j,1\leq j\leq 3,$ as otherwise $\Delta(G)$ would contain a triangle.
\end{step}

\begin{step}\label{StepA4} Let $H$ be the last term of the derived series of $G.$ Then $H\cong
\textrm{A}_5$ or $\PSL_2(8)$ and $G\cong H\times N.$

As $H$ is the last term of the derived series of $G,$ where $G$ is nonsolvable, we deduce that $H$
is perfect. Let $U=H\cap N.$ Then $U\unlhd G.$ Since $G/N$ is nonabelian simple and $HN/N\unlhd
G/N$ is a nontrivial normal subgroup, it must be that $G=HN.$ Hence $G/N\cong H/U.$ Thus $H/U$ is
isomorphic to either $\textrm{A}_5$ or $\PSL_2(8)$ by Step~\ref{StepA2}. It suffices to show that
$U$ is trivial. As $\Delta(H)$ is a subgraph of $\Delta(G),$ it follows from Step~\ref{StepA3} that
there is no edges among primes $\{p_1,p_2,p_3\}$ in the subgraph $\Delta(H).$

Suppose by contradiction that $U$ is nontrivial. Since $U$ is solvable, it has a nontrivial
character $\lambda\in\Irr(U)$ with $\lambda(1)=1.$ As $H/U$ is nonabelian simple and there is no
edges among primes in $\pi(H/U),$ we deduce from Lemma~\ref{Extendible characters} that $\lambda$
is extendible to $\lambda_0\in\Irr(H)$ and thus $H$ has a nontrivial linear character, which is
impossible as $H$ is perfect. Thus $U$ must be trivial and so $G\cong H\times N$ as required.
\end{step}

\begin{step} Completion of the proof.

We have proved that $G= H\times N,$ where $H$ is isomorphic to either $\textrm{A}_5$ or
$\PSL_2(8).$  Since $\rho(G)=\rho(H)\cup \rho(N)$ and $\rho(H)=\pi(G/N)=\{p_1,p_2,p_3\},$ we deduce
that $\rho(N)=\{r_1,r_2\}=\tau.$ Thus $\rho(H)\cap \rho(N)=\emptyset.$ As there is no edges among
primes in $\tau$ by Lemma~\ref{Invariant characters}$(1),$ we deduce that $\Delta(N)$  has two connected components. By taking $K=N,$ we see that $G=H\times K$ satisfies
$(2)$ of Theorem~B, which is a contradiction.
\end{step}
This final contradiction shows that $G$ must satisfy one of the conclusions in Theorem~B. The proof
of the theorem is now complete
\end{proof}

\begin{proof}[\bf Proof of Theorem C]
Suppose by contradiction that there exists a group $G$ whose prime graph is a cycle or a tree with
at least $n\geq 5$ vertices. As a cycle or a tree contains no triangles, we deduce from Theorem~A
that $n\leq 5.$ Thus $n=5.$ By Theorem~B, $\Delta(G)$ must be one of the graphs in Figure A. But
this is impossible as these graphs are neither a cycle nor a tree. This completes the proof.
\end{proof}


\section*{Acknowledgment}
The author is grateful to M. Lewis for his help during the preparation of this work. The author would also like to thank the anonymous reviewer for the careful reading of the manuscript and for making numerous corrections and suggestions. As a result,  the exposition of our paper has been improved significantly.


\end{document}